\renewcommand{\:}{\mathrel{\mathop{:}}}
\renewcommand{\O}{\mathcal{O}}
\renewcommand{\P}{\mathbb{P}}
\newcommand{\E}{\mathbb{E}}
\newcommand{\N}{\mathds{N}}
\newcommand{\R}{\mathds{R}}
\newcommand{\C}{\mathds{C}}
\newcommand{\KLEINO}{{\scriptstyle{\mathcal{O}}}}
\renewcommand{\1}{\mathbbm{1}}
\renewcommand{\H}{\mathds{H}}
\newcommand{\ii}{\ensuremath{\mbox{\usefont{U}{eur}{m}{n} i}}} % Euler Roman "i" als imaginäre Einheit.
\DeclareSymbolFont{largesymbols}{OMX}{yhex}{m}{n}
\DeclareMathAccent{\verywidehat}{\mathord}{largesymbols}{'144}
\newcommand{\var}{\mathbb{V}\hspace*{-0.05cm}\textnormal{a\hspace*{0.02cm}r}}
\newtheorem{defi}{Definition}
\newtheorem{remark}{Remark}
\newtheorem{prop}{Proposition}
\newtheorem{lem}{Lemma}
\newtheorem{theo}{Theorem}
\newtheorem{assump}{Assumption}
\def\Links{\tagsleft@true}\def\Rechts{\tagsleft@false}
\def\lsim{\mathrel{\rlap{\lower4pt\hbox{\hskip1pt$\sim$}}
    \raise1pt\hbox{$<$}}}                % less than or approx. symbol
\def\gsim{\mathrel{\rlap{\lower4pt\hbox{\hskip1pt$\sim$}}
    \raise1pt\hbox{$>$}}}
\providecommand{\eps}{\varepsilon}
\providecommand{\floor}[1]{\lfloor #1 \rfloor}
\begin{document}
\selectlanguage{english}
\pagestyle{fancy}
\title{Spectral covolatility estimation from noisy observations using local weights}
\fancyhf{}
\lhead[\thepage ~~~~\textsl{M.\,Bibinger\,\&\,M.\,Rei\ss}]{}
\rhead[]{\textsl{Spectral covolatility estimation}~~~~ \thepage}
\huge \noindent
\textbf{Spectral estimation of covolatility\\ from noisy observations using local weights}\\[.5cm]
\selectlanguage{ngerman}
\Large  Markus Bibinger \footnote{Financial support from the Deutsche Forschungsgemeinschaft via SFB 649 `\"Okonomisches Risiko', Humboldt-Universität zu Berlin, is gratefully acknowledged.}
\,\&\,Markus Rei\ss\footnotemark[1]\\[.5cm]
\selectlanguage{english}
\large \noindent
\textsl{Institute of Mathematics,
Humboldt-Universität zu Berlin}\\[.4cm]
 \normalsize\textbf{ABSTRACT.
We propose localized spectral estimators for the quadratic covariation and the spot covolatility of diffusion processes which are observed discretely with additive observation noise. The appropriate estimation for time-varying volatilities is based on an asymptotic equivalence of the underlying statistical model to a white noise model with correlation and volatility processes being constant over small time intervals. The asymptotic equivalence of the continuous-time and the discrete-time experiments is proved by a construction with linear interpolation in one direction and local means for the other. The new estimator outperforms earlier nonparametric methods in the literature for the considered model. We investigate its finite sample size characteristics in simulations and draw a comparison between various proposed methods.} \\[.4cm]
\small
\textsl{Key words: asymptotic equivalence, covariation, integrated covolatility, microstructure noise, spectral adaptive estimation}\\[.3cm]\noindent
\normalsize
%%%%%%%%%%%%%%%%%%%%%%%%%%%%%%SECTION 1 INTRODUCTION%%%%%%%%%%%%%%%%%%%%%%%%%%%%%%%%%%%%%%%%%%%%%%%%%%%%%%%%%%%%%%%%%%%%%%%%%
\section{Introduction\label{sec:1}}
The estimation of the quadratic (co-)variation of semimartingales is of large interest in statistics and financial econometrics. Especially, statistical models taking market microstructure frictions into account have attracted a lot of attention in recent years. Inspired by empirical studies of the characteristics of high-frequency financial data, a prominent approach is to describe asset prices as a superposition of a discretely sampled semimartingale with an independent additive noise component.\\
The finding that $n$ observations of a Brownian motion with noise on a discrete time grid possesses the LAN-property in Le Cam's sense with the rate $n^{-\nicefrac{1}{4}}$ by \cite{gloter}, instead of the usual $n^{-\nicefrac{1}{2}}$ rate in the absence of noise, has provided the optimal rate and a parametric efficiency bound for the asymptotic variance as a benchmark for this estimation problem. Interestingly, the nuisance quantity, namely the noise level, can be estimated with the usual faster rate in this model in contrast to the parameter of interest. This is caused by observation errors with non-decreasing variances perturbing diffusion increments of order $n^{-\nicefrac{1}{2}}$. These features carry over to the estimation problem of covariation in a multidimensional setting as has been shown in \cite{bibinger}.\\
The key role of quantifying integrated (co-)volatilities in portfolio optimization and risk management has stimulated an increasing interest in estimation methods for these models starting with \cite{howoften} and \cite{zhangmykland}. Subsequently, at least three nonparametric approaches for integrated volatility estimation have been suggested, the multi-scale realized volatility by \cite{zhang}, a pre-average strategy by \cite{JLMPV} and the realized kernels from \cite{bn2}. All estimators are based on quadratic forms of the observations and depend on a globally chosen tuning parameter. For that reason, when ignoring the treatment of end-effects, all three share a similar asymptotic behavior. They attain the optimal rate, but cannot be asymptotically efficient for time-varying volatility functions.
Still, several robustness results to more realistic models incorporating non-i.\,i.\,d.\,noise and stochastic volatilities with leverage have been established and make these approaches quite attractive.

An alternative approach for the estimation of the quadratic variation arising in \cite{howoften} from the parametric point of view is based on the MLE for this model. It turned out in \cite{xiu} that the MLE for integrated volatility can cope with a nonparametric volatility specification. This quasi-maximum likelihood estimator (QMLE) also attains the optimal rate. Asymptotic efficiency, however, is achieved only in the parametric setup with constant volatility. In \cite{reiss} an asymptotically efficient estimator based on spectral theory and localized parametric estimation for asymptotically shrinking blocks has been constructed. The idea stems from an asymptotic equivalence result in the spirit of \cite{gramanussbaum} pertaining the underlying nonparametric setting and a piecewise constant local parametric approximation. In \cite{corsi} a related estimation strategy for integrated volatility using a discrete sine transform approach with the eigenvectors of an equidistantly observed parametric model is considered and tested in an application study. \\
Ongoing progress in this research area has recently led to estimation approaches for the integrated covolatility in multidimensional models. The above-mentioned methods carry over to a multidimensional setting. Rate-optimal estimators, which also cope with asynchronous observations, have been established by \cite{kinnepoldi}, \cite{sahalia} and \cite{bibinger}, while \cite{bn1} focusses on positive-definite (co)volatility matrix estimators.\\
The motivation and contribution of the article at hand is twofold. First, we step forward towards a deeper understanding of the statistical properties of covariation estimation from noisy discretely observed diffusions. In particular, we prove that observing two correlated diffusion processes with noise at synchronous times is asymptotically equivalent (in the sense of Le Cam's equivalence of statistical experiments)  to observations in a related continuous time white noise model. The procedure is completely explicit and thus allows to transfer estimators and tests from one model to the other with the same asymptotic properties.  In particular, for bounded loss functions asymptotic efficiency results are the same in both model sequences. The white noise model itself is asymptotically equivalent to a piecewise constructed parametric model. That result is an extension of the one-dimensional findings in \cite{reiss} and gives rise to our local spectral approach. The second contribution is provided by our nonparametric spectral estimators of covolatility (SPECV) for both, the integrated covolatility (i.e. covariation) and the spot (i.e. instantaneous) covolatility. The estimators are based on certain empirical bivariate Fourier coefficients on each block in time which in the piecewise parametric white noise model are just independent Gaussian vectors in $\R^2$ with volatilities and covolatilities appearing in the covariance structures. This very simple structure allows a straight-forward analysis and  often reduces the estimation variance significantly compared to the previously suggested methods. This is corroborated by simulation results which show good finite-sample properties.\\
The article is arranged in three upcoming sections and an appendix comprising the technical proofs.
Section \ref{sec:2} is devoted to the underlying statistical experiments and the asymptotic equivalence results.
In Section \ref{sec:3}, we develop the SPECV, spectral estimator of covolatility, and investigate its mathematical properties. A discussion and simulation study is provided in Section \ref{sec:4}, where the SPECV of integrated covolatility is compared to concurrent nonparametric approaches. Owing to its local spectral construction principle, the new approach outperforms earlier methods if the correlation or volatility processes vary in time.

%%%%%%%%%%%%%%%%SECTION 2%%%%%%%%%%%%%%%%%%%%%%%%%%%%%%%%%%%%%%%%%%%%%%%%%%%%%%%%%%%%%%%%%%%%%%%%%%%%%%%%%%%%%%%%%%%%%%
\section{Asymptotic equivalence of the discrete regression-type and the continuous white noise experiment\label{sec:2}}
Consider the statistical experiment in which a two-dimensional discrete time process $\tilde{\mathbf{Z}}$ defined by
\Links
\begin{align}\label{E0}\tag{$\mathcal{E}_0$}\tilde{\mathbf{Z}}_{t_i^n}=\mathbf{Z}_{t_i^n}+\mathbf{\eps}_i,\,0\le i\le n,~\mbox{with}~\mathbf{Z}_t=\mathbf{Z}_0+\int_0^t\Sigma_s^{\nicefrac{1}{2}}\,d\mathbf{B}_s,\,t\in[0,1]\end{align}\Rechts
is observed, where $\mathbf{B}$ is a two-dimensional standard Brownian motion and
\begin{align*}\Sigma_t=\left(\begin{array}{cc} (\sigma_t^X)^2 & \rho_t\sigma_t^X\sigma_t^Y\\[.135cm]  \rho_t\sigma_t^X\sigma_t^Y &(\sigma_t^Y)^2\end{array}\right)\end{align*}
the (spot) volatility matrix. The signal part of $\tilde{\mathbf{Z}}=(\tilde X,\tilde Y)^{\top}$, denoted $\mathbf{Z}=\left(X,Y\right)^{\top}$, and called efficient price process in finance, is supposed to be independent of the observation noise $\mathbf{\eps}=(\eps^X,\eps^Y)^{\top}$.
The observation errors $(\mathbf{\eps}_i)$ are i.\,i.\,d.\, centred normal with covariance matrix
\begin{align*}\mathbf{H}=\left(\begin{array}{cc} \eta_X^2 & \eta_{XY}\\  \eta_{XY} &\eta_Y^2\end{array}\right)\,.\end{align*}
We consider time-varying volatility matrices $\Sigma$ belonging to a Hölder ball of order $\alpha\in(0,1]$ and radius $R>0$, i.e. $\Sigma\in C^\alpha(R)$ with
\begin{align*}&C^{\alpha}(R)=\{f\in C^{\alpha}([0,1],\mathds{R}^{2\times 2})|\|f\|_{C^{\alpha}}\le R\}\text{ where }\|f\|_{C^{\alpha}}\:=\|f\|_{\infty}+\sup_{x\ne y}{\frac{\|f(x)-f(y)\|}{|x-y|^{\alpha}}}\,.\end{align*}
We denote the %Euclidean
spectral norm in $\R^{2\times 2}$ always by $\|\cdot\|$ and define $\|f\|_{\infty}\:=\sup_{t\in[0,1]}{\|f(t)\|}$.\\
In \eqref{E0} we allow for a non-equidistant synchronous observation scheme $(t_i^n)_{0\le i\le n}$, but we assume %will have to impose %restrictions in the sense
that the sampling can be transferred to an equidistant scheme by a quantile transformation independent of $n$.

\begin{assump}\label{sampling}Suppose that there exists a %strictly increasing and
differentiable distribution function $F:[0,1]\rightarrow [0,1]$ with $F(0)=0$, $F(1)=1$ and $F'>0$, such that the observation times of $\tilde{\mathbf{Z}}=(\tilde X,\tilde Y)^{\top}$ in \eqref{E0} are generated by  $t_i^n=F^{-1}(i/n),\,i=0,\ldots,n$.
\end{assump}

Note that we only consider deterministic designs of observation times. Under a random sampling scheme, with c.\,d.\,f.\,$F$, independent of $\tilde{\mathbf{Z}}$, the estimators are expected to have similar properties, but the mathematical analysis is much harder.\\
We use a similar notation for the white noise experiment
\Links
\begin{align}\label{E1}\tag{$\mathcal{E}_1$}d\tilde{\mathbf{Z}}_t=\mathbf{Z}_t\,dt+n^{-\nicefrac{1}{2}}\mathbf{H}^{\nicefrac{1}{2}} \,d\mathbf{W}_t\,,t\in[0,1]\,,\end{align}\Rechts
with the covariance matrix $\mathbf{H}$ of $\eps$, $\mathbf{Z}_{t}=\mathbf{Z}_0+\int_0^t\Sigma_{s}^{\nicefrac{1}{2}}\,d\mathbf{B}_s$ and a standard two-dimensional Brownian motion $\mathbf{W}$ independent of $\mathbf{B}$.\\
In the following, we shall prove the results for an equidistant setting $t_i^n=i/n,i=0,\ldots,n$. This is founded on the connection between a sampling scheme based on a quantile transformation of the equidistant grid and an equidistantly observed process with transformed volatility matrix by the identity in law
\begin{align*}
\mathbf{Z}^F_u:=\mathbf{Z}_{F^{-1}(u)}\stackrel{d}{=}\int_0^u(\Sigma^F_s)^{\nicefrac12}\,d\mathbf{B}_s\text{ with } \Sigma^F_s=\Sigma_{F^{-1}(s)}(F^{-1})^{\prime}(s),
\end{align*}
which follows directly from the identity for covariance functions of these Gaussian processes via It\^o isometry.
Hence, upcoming results can be generalized for all $F$ satisfying Assumption \ref{sampling}, replacing everywhere $\mathbf{Z}$ by $\mathbf{Z}^F$, $i/n$ by $t_i^n$ and $\Sigma$ by $\Sigma^F$.
Yet, the ease in dealing with transformations in the white noise model even gives another useful representation for non-equidistant design. Experiment $(\mathcal{E}_1)$ in terms of observing $\mathbf{Z}^F$ in noise is equivalent to observing
\[ d\tilde{\mathbf{Z}}_{F(t)}=\mathbf{Z}_tF'(t)\,dt+n^{-\nicefrac{1}{2}}\mathbf{H}^{\nicefrac{1}{2}}F'(t)^{1/2} \,d\mathbf{W}_t\,,t\in[0,1],
\]
see below for the exact notion of Le Cam equivalence which can be easily verified here by the identity of likelihood processes.
Dividing by $F'(t)$ yields further equivalence with observing
\begin{equation}\label{EqWNMF} d\overline{\mathbf{Z}}_{t}=\mathbf{Z}_t\,dt+(nF'(t))^{-\nicefrac{1}{2}}\mathbf{H}^{\nicefrac{1}{2}}\,d\mathbf{W}_t\,,t\in[0,1].
\end{equation}
Interpreting $F^{\prime}$ as a design density, $(nF^{\prime}(t))^{-1/2}$ can be understood as the local noise level induced by the local sample size $nF^{\prime}$.
As we shall establish next, experiments $(\mathcal{E}_0)$ and $(\mathcal{E}_1)$ will be asymptotically equivalent for $n\to\infty$.
 
\begin{defi}
Let $\mathcal{E}_0(n,\alpha,R,\underline{\Sigma})$ with $n\in\mathds{N},\alpha\in(0,1],R,\underline{\Sigma}\ge 0$, be the statistical experiment generated by observations from \eqref{E0} with $t_i^n=i/n$. The unknown parameter $\Sigma$ in \eqref{E0} belongs to the class $C^{\alpha}(R)$ and satisfies $\Sigma_t\ge \underline{\Sigma}E_2$ for all $t\in[0,1]$ with the identity matrix $E_2\in\mathds{R}^{2\times 2}$, i.e.  the smallest eigenvalues of $\Sigma_t$ are bounded from below by $\underline{\Sigma}$.

Analogously, let $\mathcal{E}_1(n,\alpha,R,\underline{\Sigma})$ be the statistical experiment generated by observing \eqref{E1} with the same parameter class for $\Sigma$.
\end{defi}
Throughout this article we consider deterministic (squared) volatility functions $\Sigma_t,t\in[0,1]$, with the smallest eigenvalue $\underline{\Sigma}$, which is assumed to be bounded uniformly from below by a positive constant for our second equivalence result to a locally parametric experiment. We impose sufficient smoothness for $\Sigma$, in terms of Hölder continuity, such that equivalence of $\mathcal{E}_0$ to $\mathcal{E}_1$ and $\mathcal{E}_2$, respectively, holds. The first equivalence is valid for any Hölder exponent $\alpha>0$ and the block-wise constant approximation for $\alpha>1/2$.\\
For the following results, we will briefly recall the notion of asymptotic equivalence, Le Cam deficiency and Le Cam distance. We refer interested readers to \cite{lecamyang} for more information on the underlying theory.
For statistical experiments $\mathcal{E}_0=\left(\mathcal{X}_0,\mathcal{F}_0,\{\P^0_{\theta}\,|\theta\in\Theta\}\right)$ and $\mathcal{E}_1=\left(\mathcal{X}_1,\mathcal{F}_1,\{\P^1_{\theta}\,|\theta\in\Theta\}\right)$ with the same parameter set $\Theta$ defined on (possibly) different Polish spaces, their Le Cam deficiency is %the pseudo metric
defined by
\begin{align*}\delta\left(\mathcal{E}_0,\mathcal{E}_1\right)=\inf_K{\sup_{\theta\in\Theta}\|K\P_{\theta}^0-\P_{\theta}^1\|_{\text{TV}}}\,,\end{align*}
where the infimum is taken over all Markov kernels (or randomisations) $K$ from $(\mathcal{X}_0,\mathcal{F}_0)$ to $(\mathcal{X}_1,\mathcal{F}_1)$.
The Le Cam distance is defined by
\begin{align*}\Delta\left(\mathcal{E}_0,\mathcal{E}_1\right)=\max{\left(\delta\left(\mathcal{E}_1,\mathcal{E}_0\right),\delta\left(\mathcal{E}_0,\mathcal{E}_1\right)\right)}\,.\end{align*}
If
\begin{align*}\lim_{n\rightarrow\infty}\Delta \left(\mathcal{E}_0^n,\mathcal{E}_1^n\right)=0\end{align*}
holds for sequences of experiments $\left(\mathcal{E}_0^n\right)_n$ and $\left(\mathcal{E}_1^n\right)_n$, then these sequences are called asymptotically equivalent.
The construction of the Markov kernel $K$ will be explicit in all the proofs given in this article in terms of data transformations and randomisations, which allows a constructive way to transfer methods from one model to the other and vice versa.

\begin{theo}\label{theo1}
The statistical experiments $\mathcal{E}_0(n,\alpha,R,\underline{\Sigma})$ and $\mathcal{E}_1(n,\alpha,R,\underline{\Sigma})$ are for any $\alpha>0,\underline{\Sigma}\ge 0,R>0$ and $n\rightarrow\infty$ asymptotically equivalent. More precisely, the Le Cam distance is of order
\begin{align}\Delta\left(\mathcal{E}_0,\mathcal{E}_1\right)=\mathcal{O}\left(Rn^{-(\alpha\wedge \nicefrac12)}\underline{\mathbf{H}}^{-1}\right)\,,\end{align}
where $\underline{\mathbf{H}}>0$ denotes the smallest eigenvalue of $\mathbf{H}$.
\end{theo}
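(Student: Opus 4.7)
The plan is to exhibit explicit Markov kernels realising both directions of the Le Cam deficiency and reduce the resulting total variation estimates to Hellinger distances between Gaussian laws on path space. By the quantile transformation displayed after Assumption~\ref{sampling}, it suffices to work on the equidistant grid $t_i^n=i/n$, since $\Sigma$ is merely replaced by $\Sigma^F$ within the same H\"older class; moreover, for deterministic $\Sigma$ all laws in sight are Gaussian.

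For $\delta(\mathcal{E}_1,\mathcal{E}_0)$ I would use the \emph{local means} kernel: given a continuous path $\tilde{\mathbf{Z}}$ from \eqref{E1} set
\[
\hat{\mathbf{Z}}_i \;=\; n\,(\tilde{\mathbf{Z}}_{i/n}-\tilde{\mathbf{Z}}_{(i-1)/n}) \;=\; n\int_{(i-1)/n}^{i/n}\mathbf{Z}_s\,ds + \mathbf{H}^{\nicefrac{1}{2}}\boldsymbol{\xi}_i,\quad i=1,\ldots,n,
\]
with $\boldsymbol{\xi}_i$ i.i.d.\ $\mathcal{N}(0,E_2)$, the missing boundary value at $i=0$ being supplied by an independent randomisation. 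The noise already matches the law of $(\boldsymbol{\eps}_i)$ exactly; only the signal discrepancy remains, which by stochastic Fubini equals
\[
n\int_{(i-1)/n}^{i/n}\mathbf{Z}_s\,ds - \mathbf{Z}_{i/n} \;=\; -\int_{(i-1)/n}^{i/n} n\bigl(u-(i-1)/n\bigr)\,\Sigma_u^{\nicefrac{1}{2}}\,d\mathbf{B}_u,
\]
a centred Gaussian vector of conditional covariance of order $n^{-1}\|\Sigma\|_\infty$. For the reverse deficiency $\delta(\mathcal{E}_0,\mathcal{E}_1)$ I would use \emph{linear interpolation}: given the discrete data I interpolate piecewise linearly and superimpose, on each cell $[(i-1)/n,i/n]$, an independent centred Gaussian bridge whose covariance matches that of the conditional law of $\tilde{\mathbf{Z}}$ from \eqref{E1} given its values at the cell endpoints. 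A decomposition of $\mathbb{P}^1_\Sigma$ into endpoint marginals and conditionally independent bridges then shows that this kernel reproduces $\mathbb{P}^1_\Sigma$ up to the same signal discrepancy as above.

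It remains to bound the Hellinger distance between the paired Gaussian laws. Two per-cell contributions appear: the Brownian fluctuation of $\mathbf{Z}$ contributing a covariance perturbation of order $\mathcal{O}(n^{-1})$ in a Mahalanobis sense, and the variation of $\Sigma$ across a cell of size $Rn^{-\alpha}$ via H\"older continuity. The Gaussian Hellinger formula weights these by $\underline{\mathbf{H}}^{-1}$; summing the squared per-cell contributions over the $n$ cells and invoking $\mathrm{TV}\le\sqrt{2}\,H$ yields the bound $Rn^{-(\alpha\wedge\nicefrac{1}{2})}\,\underline{\mathbf{H}}^{-1}$. The main obstacle I anticipate is to verify that each kernel reproduces the target measure precisely up to the controlled discrepancy rather than an extra perturbation; this requires decomposing the Gaussian path laws into conditionally independent blocks in a way compatible with the Markov kernel structure. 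A secondary subtlety is keeping the dependence on $\underline{\mathbf{H}}^{-1}$ to the first power, which forces the covariance comparisons to be carried out in the Mahalanobis metric induced by $\mathbf{H}^{-\nicefrac{1}{2}}$ rather than in operator norm.
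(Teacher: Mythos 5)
Your overall architecture matches the paper's: an explicit local-means kernel for $\delta(\mathcal{E}_1,\mathcal{E}_0)$, a linear-interpolation kernel for $\delta(\mathcal{E}_0,\mathcal{E}_1)$, and Hellinger/total-variation bounds between Gaussian laws with the noise floor supplying the factor $\underline{\mathbf{H}}^{-1}$. However, there are two genuine gaps. First, your local means are taken over the one-sided cells $[(i-1)/n,i/n]$, whereas the paper averages over intervals \emph{symmetric} about $i/n$, namely $[(2i-1)/2n,(2i+1)/2n]$. This is not cosmetic: your own Fubini identity shows the signal part of $\hat{\mathbf{Z}}_i$ is centred at the cell midpoint rather than at $i/n$, so for $i<j$ the covariance entry $n\int_{I_i}\mathbf{A}(t)\,dt$ differs from the target $\mathbf{A}(i/n)$ by $-\Sigma_{i/n}/(2n)+\mathcal{O}(n^{-1-\alpha})$. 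With $\mathcal{O}(n^2)$ off-diagonal entries each off by order $n^{-1}$, the Hilbert--Schmidt bound $\underline{\mathbf{H}}^{-2}\|C'-\tilde C\|_{\mathrm{HS}}^2$ is $\mathcal{O}(1)$ and does not vanish. The symmetric averaging kills this first-order term by parity, leaving $\mathcal{O}(n^{-1-\alpha})$ per entry and hence the rate $n^{-\alpha}$. Relatedly, your plan to ``sum the squared per-cell contributions'' is not available: the per-cell discrepancies are built from the same Brownian increments as $\mathbf{Z}_{j/n}$ for all $j\ge i$, so the cells are not independent under either law and the Hellinger distance must be controlled through the global covariance operators, as the paper does.

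Second, in the interpolation direction your bridge decomposition conflates two different objects: the grid values of the path in \eqref{E1} are $\int_0^{i/n}\mathbf{Z}_s\,ds+n^{-1/2}\mathbf{H}^{1/2}\mathbf{W}_{i/n}$, not the discrete data $\mathbf{Z}_{i/n}+\boldsymbol{\eps}_i$, so ``endpoint marginals plus conditionally independent bridges'' of $\P^1_\Sigma$ cannot be matched to a linear interpolation of the observations; moreover the conditional bridge law of the \eqref{E1} path depends on the unknown $\Sigma$ and hence cannot serve as a randomisation. The step your sketch omits is precisely the crux of the paper's argument: one must show that the covariance operator $\hat C$ of the interpolated point noise $\sum_i\boldsymbol{\eps}_ib_i$ is dominated by the white-noise covariance $n^{-1}\mathbf{H}\,\mathrm{id}$ (the paper's Jensen-inequality computation), so that the deficit $\bar C-\hat C$ is positive semi-definite and can be added as independent Gaussian noise to reach the intermediate model \eqref{iwn}; only then is the remaining comparison a Hellinger bound between $\bar C$ and $C$, reducing to the $L^2$-distance between $\mathbf{A}(t\wedge s)$ and its bilinear interpolation. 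Without that domination step, and with the one-sided averaging in the other direction, the proposal does not establish the claimed rate $\mathcal{O}(Rn^{-(\alpha\wedge 1/2)}\underline{\mathbf{H}}^{-1})$.
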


We explicitly state how asymptotic terms hinge on $\underline{\mathbf{H}}$, since this is of interest when considering noise levels decreasing with $n$.
A concise proof of this theorem is given in the appendix. The strategy of proof follows the same principle as for the one-dimensional setting in \cite{reiss}. For the proof that \eqref{E0} is at least as informative as \eqref{E1}, we construct a continuous time observation by linear interpolation. The interpolated process $\hat{\mathbf{Z}}$ is a centred Gaussian process on $[0,1]$. The associated covariance operator $\hat C$ on $L^2\left([0,1],\mathds{R}^2\right)$ is such that the difference $(\bar C-\hat C)$, where $\bar C$ is the covariance operator in a white noise model comprising the interpolated signal term, is positive (semi-)definite. For this reason observations from such a white noise model can be generated by adding an independent Gaussian noise component to $\hat{\mathbf{Z}}$.
Now a process $\bar {\mathbf{Z}}$ from this white noise model and $\tilde {\mathbf{Z}}$ in \eqref{E1} can be defined on the same probability space and it suffices to show that the total variation distance of the laws converges uniformly over $\Sigma$ to zero.
%Hence, it is sufficient to show that observing $\bar{\mathbf{Z}}$ in this white noise model is asymptotically equivalent to observing $\mathbf{Z}$ in \eqref{E1}.
This is accomplished by bounding the squared Hellinger distance. For the proof of the intuitive converse, that \eqref{E1} is at least as informative as \eqref{E0}, we take means symmetrically around the points $(i/n),\,1\le i\le (n-1)$, from \eqref{E1} and verify that the Hellinger distance between the processes generated in this manner and $\tilde {\mathbf{Z}}$ from \eqref{E0} tends to zero.\\
The important setting in which the volatility processes follow again continuous semimartingales is covered by Theorem \ref{theo1} for the case that $\mathbf{Z}$ remains conditionally Gaussian.

\begin{defi}Write $\lfloor t\rfloor_h=\lfloor t/h\rfloor h$ for $h>0$, assume $h^{-1},nh\in\mathds{N}$ and let $\mathbf{Z}_t^h=\mathbf{Z}_0+\int_0^t\Sigma^{\nicefrac{1}{2}}_{\lfloor s\rfloor_h}\,d\mathbf{B}_s$ with a two-dimensional standard Brownian motion $\mathbf{B}$. Let $\Sigma_t$ belong to $C^{\alpha}(R)$ and satisfy $\Sigma_t\ge \underline{\Sigma}E_2$. Define the process
\Links
\begin{align}\label{E2}\tag{$\mathcal{E}_2$}d\tilde{\mathbf{Z}}_t=\mathbf{Z}_t^h\,dt+n^{-\nicefrac{1}{2}}\mathbf{H}^{\nicefrac{1}{2}}\,d\mathbf{W}_t,\,t\in[0,1],\end{align}\Rechts
where $\mathbf{W}$ is a standard Brownian motion independent of $\mathbf{B}$. The statistical model generated by the observations from \eqref{E2} is denoted by $\mathcal{E}_2(n,h,\alpha,R,\underline{\Sigma})$.
\end{defi}

In experiment \eqref{E2} we thus observe a process with a volatility matrix which is constant on each block $[kh,(k+1)h)$, $k=0,1,\ldots,h^{-1}-1$. It is intuitive that for small block sizes $h$ and sufficient H\"older regularity $\alpha$ this piecewise constant approximation is sufficiently close to render the approximation error statistically negligible. This is made precise in the following theorem. Note that a piecewise constant approximation is common in this research field, but so far the only general statistical approximation result has been established by \cite{lamanga12} where, however, only contiguity and block lengths of order $n^{-1}$ are considered, while allowing for stochastic volatility. Here we need to establish convergence in total variation norm for blocks, with order $nh$ observations within each block, over which we smooth the noise perturbation. \\

\begin{theo}\label{theo2}
Assume $h^{\alpha}=\KLEINO\left(n^{-\nicefrac{1}{4}}\right)$ for $\alpha\in(1/2,1]$ and $\underline{\Sigma}>0$. Then the statistical experiments $\mathcal{E}_1(n,\alpha,R,\underline{\Sigma})$ and $\mathcal{E}_2(n,h,\alpha,R,\underline{\Sigma})$ are asymptotically equivalent:
\begin{align}\Delta\left(\mathcal{E}_1,\mathcal{E}_2\right)
=\mathcal{O}\left(Rh^{\alpha}\underline{\Sigma}^{-3/4}\underline{\mathbf H}^{-1/4}n^{\nicefrac14}\right)\,.\end{align}
\end{theo}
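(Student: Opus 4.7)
The plan is to bound $\Delta(\mathcal{E}_1,\mathcal{E}_2)$ by the total variation distance between the observation laws. Since both experiments produce observations on the common path space $C([0,1],\mathbb{R}^2)$, taking the identity as Markov kernel in both directions yields
\[\Delta(\mathcal{E}_1,\mathcal{E}_2)\le\sup_{\Sigma\in C^\alpha(R),\,\Sigma\ge\underline{\Sigma}E_2}\|\mathbb{P}^1_\Sigma-\mathbb{P}^2_\Sigma\|_{\text{TV}}\le\sqrt{2}\sup_\Sigma H(\mathbb{P}^1_\Sigma,\mathbb{P}^2_\Sigma).\]
Realizing both $\tilde{\mathbf{Z}}^{(i)}$ on one probability space with the same $(\mathbf{B},\mathbf{W})$, the laws are centered Gaussian measures differing only through the signal covariance ($\Sigma_s$ vs.\ $\Sigma_{\lfloor s\rfloor_h}$).

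To extract the exponent $n^{\nicefrac14}$ the coarse coupling bound $H^2_{|\mathbf{B}}\lesssim\tfrac{n}{\underline{\mathbf H}}\int_0^1\|\mathbf{Z}_t-\mathbf{Z}_t^h\|^2\,dt$, obtained by conditioning on $\mathbf{B}$, is too loose (it gives only $n^{\nicefrac12}$). Instead I exploit the smoothing effect of the white noise directly at the level of covariance operators. The natural tool is the $h^{-1}$-block partition $[kh,(k+1)h)$ together with the Karhunen--Lo\`eve basis of a Brownian motion on each sub-interval,
\[\psi_{kj}(t)=\sqrt{2/h}\,\sin\!\bigl((j-\tfrac12)\pi(t-kh)/h\bigr)\mathbf{1}_{[kh,(k+1)h)}(t),\qquad \lambda_j=h^2/((j-\tfrac12)^2\pi^2),\]
which simultaneously diagonalizes the Brownian-increment signal and the additive white noise inside each block. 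Projecting $d\tilde{\mathbf{Z}}^{(i)}$ onto this basis yields, conditionally on the boundary values $\mathbf{Z}_{kh}^{(i)}$, a sequence of independent Gaussian vectors $\mathbf{z}_{kj}^{(i)}$ with covariance
\[V_{kj}^{(2)}=\lambda_j\Sigma_{kh}+n^{-1}\mathbf H,\qquad V_{kj}^{(1)}=V_{kj}^{(2)}+E_{kj},\]
where the Lipschitz property of the matrix square root on $\{\Sigma\ge\underline{\Sigma}E_2\}$ together with H\"older continuity gives $\|E_{kj}\|\lesssim\lambda_j Rh^\alpha/\sqrt{\underline{\Sigma}}$ in spectral norm.

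The per-mode Gaussian Hellinger bound $H_{kj}^2\lesssim\|(V_{kj}^{(2)})^{-\nicefrac12}E_{kj}(V_{kj}^{(2)})^{-\nicefrac12}\|^2$ is then summed by splitting at the critical index $j^\ast\simeq h\sqrt{n\underline{\Sigma}/\underline{\mathbf H}}$ where signal and noise variances balance. In the signal-dominated range $j\le j^\ast$ each of the $j^\ast$ modes contributes $H_{kj}^2\lesssim R^2h^{2\alpha}/\underline{\Sigma}^2$; in the noise-dominated range $j>j^\ast$ one has $H_{kj}^2\lesssim R^2h^{2\alpha+4}n^2/(j^4\underline{\Sigma}\,\underline{\mathbf H}^2)$, and the tail $\sum_{j>j^\ast}j^{-4}\sim(j^\ast)^{-3}$ yields the same order. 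Both regimes combine to a per-block contribution of order $R^2h^{2\alpha+1}n^{\nicefrac12}\underline{\Sigma}^{-\nicefrac32}\underline{\mathbf H}^{-\nicefrac12}$; multiplying by the $h^{-1}$ blocks and taking a square root produces the claimed bound $Rh^\alpha n^{\nicefrac14}\underline{\Sigma}^{-\nicefrac34}\underline{\mathbf H}^{-\nicefrac14}$.

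The main obstacle is that this block-wise spectral factorization is orthogonal only \emph{modulo} the boundary information $\{\tilde{\mathbf{Z}}_{kh}\}_k$: since $\mathbf{Z}$ and $\mathbf{Z}^h$ are continuous Markov processes whose values at block boundaries do not coincide, a chain-rule argument for the Hellinger/KL decomposition that carefully conditions on the boundary increments is required, and one has to verify that the boundary sigma algebra itself contributes at most the same order as the interior modes. The assumptions $\alpha>\nicefrac12$ and $h^\alpha=\KLEINO(n^{-\nicefrac14})$ enter precisely here: they guarantee that the accumulated boundary error across the $h^{-1}$ blocks and the higher-order terms of the Gaussian Hellinger expansion (where the linearization $\log(1+x)\approx x$ is applied) stay within the leading bound.
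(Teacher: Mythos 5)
Your overall strategy---bounding the Le Cam distance by the Hellinger distance between two centred Gaussian laws and performing a spectral computation whose effective mode count $\sqrt{n\underline{\Sigma}/\underline{\mathbf H}}$ produces the $n^{\nicefrac14}$---is the right one, and your arithmetic at the crossover index reproduces the stated bound. The paper, however, works with the \emph{global} covariance operators on $L^2([0,1],\R^2)$: partial integration gives the quadratic-form domination $\langle(C_\Sigma-C_{\Sigma^h})f,f\rangle=\int_0^1F_t^\top(\Sigma_t-\Sigma_{\lfloor t\rfloor_h})F_t\,dt\le Rh^\alpha\langle\mathfrak Cf,f\rangle$, with $\mathfrak C$ the covariance operator of a standard two-dimensional Brownian motion, whence $\|C_\Sigma^{-\nicefrac12}(C_{\Sigma^h}-C_\Sigma)C_\Sigma^{-\nicefrac12}\|_{\text{HS}}\le Rh^\alpha\|G(\mathfrak C)\|_{\text{HS}}$ for $G(z)=z(z\underline{\Sigma}+\underline{\mathbf H}n^{-1})^{-1}$, and the known spectrum of $\mathfrak C$ (the one-dimensional Brownian eigenvalues with double multiplicity) gives the result. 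This global route avoids both obstacles that your blockwise construction creates.

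The first obstacle (conditioning on the block-boundary values) you flag yourself and leave unresolved. The second, more serious one you do not flag: your basis $\psi_{kj}$ diagonalizes the signal covariance only under $\mathcal{E}_2$, where $\Sigma$ is constant on each block. Under $\mathcal{E}_1$ the modes within a block are \emph{correlated}, with cross-covariances $E_{kjj'}=\int\Psi_{kj}\Psi_{kj'}(\Sigma_t-\Sigma_{kh})\,dt$ for $j\ne j'$ ($\Psi_{kj}$ the antiderivatives), so the joint Hellinger distance is not the sum of your per-mode quantities. Controlling these off-diagonal entries only by Cauchy--Schwarz, $\|E_{kjj'}\|\le Rh^\alpha\sqrt{\lambda_j\lambda_{j'}}$, makes the Hilbert--Schmidt sum over the $\sim(j^\ast)^2$ relevant pairs exceed your diagonal contribution by a factor $j^\ast\simeq h\sqrt{n\underline{\Sigma}/\underline{\mathbf H}}$ per block, which destroys the stated rate for general $h$. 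The operator inequality $\pm(C_\Sigma-C_{\Sigma^h})\le Rh^\alpha\,\mathfrak C$ is exactly what allows the paper to reduce the full (off-diagonal included) Hilbert--Schmidt norm to the diagonal sum; some such structural domination is the missing ingredient in your argument. Two minor points: the signal covariance error involves $\Sigma_t-\Sigma_{kh}$ directly via the It\^o isometry, not the matrix square roots, so the Lipschitz-of-the-square-root step and the factor $\underline{\Sigma}^{-\nicefrac12}$ in $\|E_{kj}\|$ are out of place; and the hypotheses $\alpha>\nicefrac12$, $h^{\alpha}=\KLEINO(n^{-\nicefrac14})$ are not needed to tame boundary terms---they simply make the displayed bound tend to zero while keeping $nh\ge1$.
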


The asymptotic equivalence results lead to a new approach for the covariation estimation problem.
Following the idea for the one-dimensional case from \cite{reiss}, we consider an orthonormal system $(\varphi_{jk})$ in $L^2([0,1])$ of cosine functions with support on the blocks $[kh,(k+1)h]$ and frequencies of order $j\ge 1$. Their antiderivatives $(\Phi_{jk})$ are sine functions on the same support and will also play a crucial role. We set
\begin{subequations}
\begin{align}\label{varphi}\varphi_{jk}(t)=\sqrt{\frac{2}{h}}\cos{\left(j\pi h^{-1}\left(t-kh\right)\right)}\1_{[kh,(k+1)h]}(t),\quad j\ge 1\,,k=0,\ldots,h^{-1}-1\,,\end{align}
\begin{align}\label{Phi}\Phi_{jk}(t)=\hspace*{-.1cm}\left(\hspace*{-.1cm}\sqrt{2h}\,n\sin{\hspace*{-.075cm}\left(\frac{j\pi}{2nh}\right)}\right)^{-1}\hspace*{-.1cm}\sin{\left(j\pi h^{-1}\hspace*{-.075cm}\left(t-kh\right)\right)}\1_{[kh,(k+1)h]}(t),\, j\ge 1\,,k=0,\ldots,h^{-1}-1\,.\end{align}
\end{subequations}
Differently from \cite{reiss}, we renormalize the antiderivatives \eqref{Phi} exactly for the discrete analysis.
The functions \eqref{varphi} and \eqref{Phi}, evaluated on the grid given by the observation times, provide spectral weights for local blockwise averages. By virtue of the transformation for general observation schemes discussed above, we may for ease of exposition consider the equidistant grid:
\begin{subequations}
\begin{align}\label{bl1}\tilde x_{jk}=\sum_{l=1}^n\left(\tilde X_{\frac{l}{n}}-\tilde X_{\frac{l-1}{n}}\right)\Phi_{jk}\left(\frac{l}{n}\right),\end{align}
\begin{align}\label{bl2}\tilde y_{jk}=\sum_{l=1}^n\left(\tilde Y_{\frac{l}{n}}-\tilde Y_{\frac{l-1}{n}}\right)\Phi_{jk}\left(\frac{l}{n}\right).\end{align}
\end{subequations}
Since $\Phi_{j(h^{-1}-1)}(1)=0$, the last addend is zero for all blocks $k$. We stress that by the indicator functions in \eqref{varphi} and \eqref{Phi} and since $\Phi_{jk}(kh)=\Phi_{jk}((k+1)h)=0$, the sums in \eqref{bl1} and \eqref{bl2} only extend over $l=k\cdot nh+1,\,\ldots,\,(k+1)\cdot nh-1$. Therefore, families $(\tilde x_{jk},\tilde y_{jk})_j$ are uncorrelated and thus by Gaussianity independent for different blocks $k$. Besides the independence between blocks, we additionally benefit from the orthogonality of each family of functions associated with a specific frequency. The orthogonality relations $\int \varphi_{jk}\varphi_{ik}=0$ and $\int \Phi_{jk}\Phi_{ik}=0$ $\forall i\ne j$ in $L^2([0,1])$ will remain valid for the discretized versions and the corresponding sums when $i,j\in\{1,\ldots,nh\}$.
For the purpose of explicitly analyzing the discrete terms, we introduce the notion of empirical scalar products:
\begin{subequations}
\begin{align}\label{sc1}\langle f,g\rangle_n\:=\frac{1}{n}\sum_{l=1}^n f\left(\frac{l}{n}\right)g\left(\frac{l}{n}\right)~\mbox{and}~\|f\|_n^2\:=\frac{1}{n}\sum_{l=1}^nf^2\left(\frac{l}{n}\right)=\langle f,f\rangle_n\,,\end{align}
\begin{align}\label{sc2}\left[ f,g\right]_n\:=\frac{1}{n}\sum_{l=1}^n f\left(\frac{l-\frac{1}{2}}{n}\right)g\left(\frac{l-\frac{1}{2}}{n}\right), \;\mbox{for}\,f,g:[0,1]\rightarrow\mathds{R}\,.\end{align}
By abuse of notation for a vector $Z=(Z_1,\ldots,Z_n)$ and $f:[0,1]\rightarrow\mathds{R}$, we will also write
\begin{align}\langle Z,f\rangle_n\:=\frac{1}{n}\sum_{l=1}^nZ_l\,f\left(\frac{l}{n}\right)~\mbox{and}~\left[Z,f\right]_n\:=\frac{1}{n}\sum_{l=1}^nZ_l\,f\left(\frac{l-\frac{1}{2}}{n}\right)\,.\end{align}
For two such vectors $Z$ and $\tilde Z$ it is convenient to introduce the notation
\begin{align}\langle Z,\tilde Z\rangle_{nh;k}\:=\frac{1}{n}\sum_{l=1}^nZ_l \tilde Z_l \1_{[kh,(k+1)h]}\left(\frac{l}{n}\right)=\frac{1}{n}\sum_{i=0}^{nh}Z_{knh+i}\tilde Z_{knh+i}\,.\end{align}
\end{subequations} \noindent
The following identity from discrete Fourier analysis is a main ingredient in the construction of the estimator and for its error analysis below.

\begin{prop}\label{propsbp}
For the blockwise weighted sums $\tilde x_{jk},\tilde y_{jk}\,,j\in\{1,\ldots,nh\},k\in\{0,\ldots,h^{-1}-1\}$, the following summation by parts formula holds true:
\begin{align}\label{sbp}\tilde y_{jk}&=\sum_{l=1}^{n}\Delta \tilde Y_{l}\,\Phi_{jk}\left(\frac{l}{n}\right)=-\sum_{l=0}^{n-1}\tilde Y_\frac{l}{n}\,\varphi_{jk}\left(\frac{l+\frac{1}{2}}{n}\right)\frac{1}{n}\\
&\notag=\langle n \Delta Y,\Phi_{jk}\rangle_n-\left[\eps^Y,\varphi_{jk}\right]_n \,,\end{align}
and for $\tilde x_{jk}$ analogously, where $\Delta$ denotes the backward difference operator $\Delta \tilde Y_l\:=\tilde Y_{\frac{l}{n}}-\tilde Y_{\frac{l-1}{n}}$ and $\Delta \tilde Y=\left(\Delta \tilde Y_1,\ldots,\Delta \tilde Y_n\right)$. Moreover, we have the following orthogonality identities:
\begin{subequations}
\begin{align}\label{o1}\left[ \varphi_{jk},\varphi_{rk}\right]_n=\delta_{jr}\,,~j,r\in\{1,\ldots,nh\}\,,k=0,\ldots,h^{-1}-1\,,\end{align}
\begin{align}\label{o2}\langle \Phi_{jk},\Phi_{rk}\rangle_n=\|\Phi_{jk}\|_n^2\,\delta_{jr}\,,~j,r\in\{1,\ldots,nh\}\,,k=0,\ldots,h^{-1}-1\,,\end{align}
where $\delta_{jr}=\1_{\{j=r\}}$ is Kronecker's delta.
\end{subequations}
The empirical norm
\begin{align}\label{empnorm}\|\Phi_{jk}\|_n^2=\left(4n^2\sin^2{(j\pi/(2nh))}\right)^{-1},\,k\in\{0,\ldots,h^{-1}-1\},\end{align}
does not depend on the block $k$ and appears in our estimator in the next section.
\end{prop}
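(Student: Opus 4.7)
My plan is to handle the three assertions in sequence, each reducing to an elementary discrete-trigonometric computation once the right preparatory step is taken. For the summation-by-parts formula \eqref{sbp}, I would begin with Abel's rearrangement
\[ \sum_{l=1}^{n}(a_l - a_{l-1})b_l = a_n b_n - a_0 b_1 - \sum_{l=1}^{n-1} a_l(b_{l+1}-b_l), \]
applied with $a_l=\tilde Y_{l/n}$ and $b_l=\Phi_{jk}(l/n)$. The crucial ingredient is the discrete first-difference identity
\[ \Phi_{jk}((l+1)/n) - \Phi_{jk}(l/n) = n^{-1}\varphi_{jk}\bigl((l+\nicefrac{1}{2})/n\bigr), \]
which follows from $\sin A - \sin B = 2\cos(\tfrac{A+B}{2})\sin(\tfrac{A-B}{2})$: the half-sum produces a cosine at the midpoint $(l+\nicefrac{1}{2})/n$ and the half-difference factor $\sin(j\pi/(2nh))$ cancels exactly against the normalising constant chosen in \eqref{Phi}. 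Recognising that this particular normalisation is designed precisely to make the first difference of $\Phi_{jk}$ coincide with $\varphi_{jk}$ (without an extraneous factor) is, in my view, the real content of the proposition. The support of $\Phi_{jk}$ on $[kh,(k+1)h]$ together with $\Phi_{jk}(kh)=\Phi_{jk}((k+1)h)=0$ forces the Abel boundary terms to vanish (with a short case check for the extreme blocks $k=0$ and $k=h^{-1}-1$), and splitting $\tilde Y=Y+\eps^Y$ yields the signal part $\langle n\Delta Y,\Phi_{jk}\rangle_n$ and the noise part $-[\eps^Y,\varphi_{jk}]_n$ after an index shift aligning the midpoint convention with \eqref{sc2}.

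For the orthogonality \eqref{o1}, I would substitute the explicit cosine forms and apply the product-to-sum rule $2\cos A\cos B = \cos(A-B)+\cos(A+B)$, so that $[\varphi_{jk},\varphi_{rk}]_n$ becomes a linear combination of DCT-II sums of the form $\sum_{m=1}^{nh}\cos((2m-1)\theta)$ with $\theta=(j\pm r)\pi/(2nh)$. The closed form $\sin(2N\theta)/(2\sin\theta)$ vanishes for $j\neq r$ in the admissible range $\{1,\ldots,nh\}$ and leaves $nh$ for $j=r$, so after dividing by $n$ and including the amplitude $2/h$ the identity $\delta_{jr}$ drops out. Relation \eqref{o2} is the analogous DST-II statement: $2\sin A\sin B = \cos(A-B)-\cos(A+B)$ reduces it to geometric sums $\sum_{m=1}^{nh-1}\cos(2m\theta)$, which vanish for $j\neq r$ by the same argument.

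The empirical-norm formula \eqref{empnorm} then follows by specialising the preceding computation to $j=r$: writing $\sin^2=(1-\cos(2\cdot))/2$ gives $\sum_{m=1}^{nh-1}[1-\cos(2j\pi m/(nh))]=nh$, the cosine sum equalling $-1$ because adding the $m=0$ term produces a full sum of $nh$-th roots of unity that vanishes. Multiplying by the prefactor $(2hn^3\sin^2(j\pi/(2nh)))^{-1}\cdot\nicefrac{1}{2}$ collapses the result to $(4n^2\sin^2(j\pi/(2nh)))^{-1}$, independently of $k$. The only genuine obstacle in the entire proof is the one already flagged: spotting that the normalising constant in \eqref{Phi} is not aesthetic but is engineered so that the discrete derivative of $\Phi_{jk}$ on the midpoint grid is exactly $\varphi_{jk}$; once this is in hand, the rest is bookkeeping with standard orthogonality of discrete trigonometric bases.
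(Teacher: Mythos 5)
Your proof follows essentially the same route as the paper's: Abel summation combined with $\sin A-\sin B=2\cos(\tfrac{A+B}{2})\sin(\tfrac{A-B}{2})$ — whose cancellation against the normalising constant in \eqref{Phi} is indeed the designed key point — gives \eqref{sbp} after splitting $\tilde Y=Y+\eps^Y$, and product-to-sum formulas reduce \eqref{o1}, \eqref{o2} and \eqref{empnorm} to elementary trigonometric sums exactly as in the appendix. The only cosmetic difference is that you evaluate $\sum_l\cos((2l+1)\theta)$ via the Dirichlet-kernel closed form $\sin(2N\theta)/(2\sin\theta)$ where the paper uses a term-pairing symmetry argument with sines; the two are equivalent (and neither treats the degenerate top frequency $j=r=nh$, where the sampled functions vanish identically, any differently).
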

The two representations of the blockwise sums in \eqref{sbp} are very useful when disentangling the estimation error emerging from the two independent error sources: discretization and observation noise. In particular, we use the left-hand side which involves the increments of the processes only when considering the signal parts $X$ and $Y$. For the analysis of cross terms and the pure noise parts the right-hand side of \eqref{sbp} permits a significant simplification.
In the next section, we use these ideas and the insight into the structure of the estimation problem to construct a new estimation approach for the quadratic covariation and the spot covolatility of diffusion processes based on the original model \eqref{E0}. The final estimator for the quadratic covariation appears as a linear combination of the products of the local spectral averages $\tilde x_{jk}\tilde{y}_{jk}$ over all $j$ and $k$ combined with a bias correction. We will benefit from the asymptotic equivalence results for the mathematical analysis of our estimator by the following conclusion that we can straiten the analysis to the statistical experiment
\Links
\begin{align}\label{E3}\tag{$\mathcal{E}_3$}\tilde{\mathbf{Z}}^h_{t_i^n}=\mathbf{Z}^h_{t_i^n}+\mathbf{\eps}_i,\,0\le i\le n~\mbox{with}~\mathbf{Z}^h_t=\mathbf{Z}_0+\int_0^t\Sigma_{\lfloor s\rfloor_h}^{\nicefrac{1}{2}}\,d\mathbf{B}_s,\,t\in[0,1]\,,\end{align}\Rechts
where we have noisy discrete observations with the volatility matrix being constant on blocks.

\begin{prop}\label{propE3} For $nh\in\N$, $\alpha,R>0$ and $\underline{\Sigma}\ge 0$ the statistical experiments $\mathcal{E}_2(n,h,\alpha,R,\underline{\Sigma})$ and $\mathcal{E}_3(n,h,\alpha,R,\underline{\Sigma})$ with $t_i^n=i/n$ are asymptotically equivalent:
\begin{align}\Delta\left(\mathcal{E}_2,\mathcal{E}_3\right)
=\mathcal{O}\left(R\underline{\mathbf{H}}^{-1}n^{-\nicefrac12}\right)\,.\end{align}
\end{prop}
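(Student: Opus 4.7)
The plan is to mirror the proof of Theorem \ref{theo1} with the signal $\mathbf{Z}^h$ in place of $\mathbf{Z}$, and to observe that the piecewise-constant structure of $\Sigma^h$ eliminates the $n^{-\alpha}$ contribution present there, leaving only the $n^{-\nicefrac12}$ term which in Theorem \ref{theo1} comes from the intrinsic Brownian roughness and the $L^{\infty}$-bound on the volatility. Since $\mathcal{E}_2$ and $\mathcal{E}_3$ share the identical signal $\mathbf{Z}^h$ and differ only in observation modality, all constructions of Theorem \ref{theo1} transfer essentially verbatim.

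For $\delta(\mathcal{E}_3,\mathcal{E}_2)$, I would linearly interpolate the discrete noisy data to a continuous process $\hat{\mathbf{Z}}^h$ on $[0,1]$. Writing $\hat C$ for its covariance operator on $L^2([0,1],\R^2)$ and $\bar C$ for the covariance operator of the interpolated clean signal observed in white noise of intensity $n^{-\nicefrac12}\mathbf{H}^{\nicefrac12}$, the difference $\bar C-\hat C$ remains positive semidefinite (exactly as in Theorem \ref{theo1}), so that adding an independent Gaussian process with that covariance yields a Markov kernel into a Gaussian law sharing the covariance of the $\mathcal{E}_2$-observation. The squared Hellinger distance between the two centred Gaussians then reduces to a trace computation: on the interior of each block $[kh,(k+1)h)$ the signal is a Brownian motion with fixed covariance $\Sigma_{kh}$, so the per-sampling-interval contribution is $\mathcal{O}(\|\Sigma^h\|_{\infty}\underline{\mathbf{H}}^{-1}n^{-2})$, summing to $\mathcal{O}(R^2 n^{-1}\underline{\mathbf{H}}^{-2})$ over the $n$ intervals, hence to $\mathcal{O}(Rn^{-\nicefrac12}\underline{\mathbf{H}}^{-1})$ in Hellinger and total variation.

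For the converse bound $\delta(\mathcal{E}_2,\mathcal{E}_3)$, I would form symmetric local averages of the continuous observations around each $i/n$, exactly as in Theorem \ref{theo1}, producing Gaussian vectors whose joint law approximates that of the discrete observations in $\mathcal{E}_3$. The Hellinger distance between the two $n$-variate centred Gaussians is bounded by the same trace expression, again using only $\|\Sigma^h\|_{\infty}\le R$ and $\underline{\mathbf{H}}$.

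The main obstacle is the careful treatment of the at most $h^{-1}$ grid intervals $[i/n,(i+1)/n)$ that straddle a block boundary $kh$, where $\Sigma^h$ jumps by at most $Rh^\alpha$. A direct computation gives a per-interval operator discrepancy of $\mathcal{O}(Rh^\alpha n^{-1}\underline{\mathbf{H}}^{-1})$, so the cumulative boundary contribution to the squared Hellinger distance is $\mathcal{O}(R^2 h^{2\alpha-1}n^{-2}\underline{\mathbf{H}}^{-2})$, whose square root is $\mathcal{O}(Rh^{\alpha-\nicefrac12}n^{-1}\underline{\mathbf{H}}^{-1})$. Since $h\ge 1/n$ by the assumption $nh\in\N$, this is dominated by $\mathcal{O}(Rn^{-\nicefrac12}\underline{\mathbf{H}}^{-1})$ for every $\alpha>0$, so the stated global rate holds uniformly in $h$ and $\alpha$.
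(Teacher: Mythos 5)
Your proposal is correct and follows essentially the same route as the paper: replicate the interpolation/local-averaging construction of Theorem \ref{theo1} and observe that, with $\Sigma^h$ piecewise constant, the off-diagonal interpolation is exact and only the Lipschitz contribution from the $n$ diagonal squares survives, yielding the $\mathcal{O}(R\underline{\mathbf{H}}^{-1}n^{-\nicefrac12})$ rate. Note only that your ``main obstacle'' is vacuous --- since $nh\in\N$ every block boundary $kh$ is itself a grid point, so no sampling interval straddles a jump of $\Sigma^h$ --- and that your per-interval contribution should read $\mathcal{O}(R^2\underline{\mathbf{H}}^{-2}n^{-2})$ for the exponents to sum consistently to the (correct) total you state.
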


Consequently, observing $\tilde {\mathbf{Z}}$ in \eqref{E0} is asymptotically equivalent to observations of $\tilde {\mathbf{Z}}^h$ from \eqref{E3}.
Note that for constant $\Sigma_{kh}$ on each block, the $\Phi_{jk}$ have the same structure as the eigenvectors of the covariance matrix associated with the vector of the $(nh-1)$ observed increments on the block. In particular, inserting for $h=1, k=0$, the discrete grid $t=l/(n+1),l=0,\ldots,n$, in \eqref{Phi} gives, apart from a normalizing factor, the basis used by \cite{corsi}, i.\,e.\,the eigenfunctions of the covariance operator in the parametric model. The local weighted sums \eqref{bl1} and \eqref{bl2} on each block hence constitute the corresponding Karhunen-Lo\`{ev}ve expansion. We refer to \cite{bibinger} and for the one-dimensional case to \cite{gloter} and \cite{corsi} for the explicit computation of the eigenvalues.

%%%%%%%%%%%%%%%%%%%SECTION 3%%%%%%%%%%%%%%%%%%%%%%%%%%%%%%%%%%%%%%%%%%%%%%%%%%%%%%%%%%%
\section{Local spectral estimation of covolatility\label{sec:3}}

In the sequel, we always assume $h^{\alpha}=\KLEINO\left(n^{-\nicefrac{1}{4}}\right)$, Assumption \ref{sampling} on the sampling scheme and that the volatility matrix belongs to $C^{\alpha}(R)$ for some $\alpha>1/2$ and $R>0$ and is bounded from below by $\underline{\Sigma}E_2$ with $\underline{\Sigma}>0$. By virtue of Proposition \ref{propE3}, we can then work within the simpler model \eqref{E3}. We present all results for the equidistant design $t_i^n=i/n$, noting again that the general case follows by substituting $\tilde{\mathbf{Z}}$ by $\tilde{\mathbf{Z}}^F$, $\Sigma$ by $\Sigma^F$ etc. Interestingly, integrated volatility is even invariant under this transformation:
\[ \int_0^1 \Sigma^F_u\,du=\int_0^1 \Sigma_{F^{-1}(u)}(F^{-1})'(u)\,du=\int_0^1 \Sigma_t\,dt.\]
For estimation purposes this means that we can just neglect the design in the implementation and work in ``tick time''. The invariance property, however, does not hold for powers of $\Sigma$ or for polynomials in $\sigma^X,\sigma^Y$ of degree different from two such that the asymptotic variance will depend on the design function $F$.\\
On each of the independent blocks, we have observations \eqref{bl1} and \eqref{bl2} with 
\begin{align}\label{nov}\left( \tilde x_{jk},\tilde y_{jk}\right)\sim\mathbf{N}\left(\mathbf{0}\,,\left(\begin{array}{cc} \eta_X^2/n+\|\Phi_{jk}\|_n^2(\sigma_{kh}^X)^2 & \eta_{XY}/n+\|\Phi_{jk}\|_n^2\rho_{kh}\sigma_{kh}^{X}\sigma_{kh}^Y\\  \eta_{XY}/n+\|\Phi_{jk}\|_n^2\rho_{kh}\sigma_{kh}^{X}\sigma_{kh}^Y & \eta_Y^2/n+\|\Phi_{jk}\|_n^2(\sigma_{kh}^Y)^2\end{array}\right)\right)\,,\end{align}
independently for all $j,k$, what can be proved using Proposition \ref{propsbp}. We will postpone a detailed computation of estimation errors to the Appendix \ref{sec:app2}.
For each $j,k$ fixed, the empirical covariance yields a natural estimator of the spot covolatility $\rho_{kh}\sigma_{kh}^{X}\sigma_{kh}^Y$ on each block provided we correct for the bias by subtracting $\eta_{XY}/n$. In particular, the independent statistics $(\tilde x_{jk},\tilde y_{jk})$ in \eqref{nov} form a Gaussian scale model from classical statistics and will suggest an optimal convex combination of empirircal covariances as an estimator for integrated covolatility.
\begin{remark}In the following we assume for the ease of exposition that $\eta_{XY}$ %for the bias correction of local averages
is known. Yet, we can estimate $\eta_{XY}$ from the observations with faster rate $\sqrt{n}$ by
\begin{align}\widehat{\eta_{XY}}=\frac{1}{2n}\sum_{l=1}^n\left(\tilde Y_{\frac{l}{n}}-\tilde Y_{\frac{l-1}{n}}\right)\left(\tilde X_{\frac{l}{n}}-\tilde X_{\frac{l-1}{n}}\right)\end{align}
or as well by $-n^{-1}\sum_l (\tilde Y_{{l/n}}-\tilde Y_{(l-1)/n})(\tilde X_{(l+1)/n}-\tilde X_{l/n})$. For the first estimator $\sqrt{n}$-consistency and a central limit theorem can be proved in the spirit of \cite{zhangmykland} for its one-dimensional counterpart $1/(2n)\sum_l (\tilde X_{l/n}-\tilde X_{(l-1)/n})^2$. The second estimator and its one-dimensional analogue $-n^{-1}\sum_l (\tilde X_{l/n}-\tilde X_{(l-1)/n})(\tilde X_{(l+1)/n}-\tilde X_{l/n})$ have a slightly bigger variance, but the benefit of no finite sample bias due to the quadratic (co-)variation of the signal part.
\end{remark}
By using just the lowest frequency $j=1$ in each block, we obtain a simple rate-optimal estimator of integrated covolatility when summing over all blocks $[kh,(k+1)h]$ multiplied by the block length $h$:
\begin{align}\widehat{IC}^{(SPECV,j=1)}=h\sum_{k=0}^{h^{-1}-1} \|\Phi_{1k}\|_n^{-2}\left(\tilde x_{1k}\tilde y_{1k}-\eta_{XY}/n\right)\,.
\end{align}
By independence between the blocks, its variance is of order $\O(h^{-3}(\eta_X^2/n+h^2)(\eta_Y^2/n+h^2))$. For fixed noise levels $\eta_X,\eta_Y,\eta_{XY}$, the rate-optimal choice $h\sim n^{-1/2}$ thus yields a variance of order $\O(n^{-1/2})$ (note that for $\alpha>1/2$ and $h\sim n^{-1/2}$ the condition $h^\alpha=\KLEINO(n^{-\nicefrac14})$ always holds).

It is possible to obtain a pointwise estimator of the spot covolatility $SCV_t:=\rho_t\sigma_t^{X}\sigma_t^Y$ by the average of the spectral estimators over a set ${\mathcal K}_t$ of $K$ adjacent blocks containing $t$:
\begin{equation}\label{PilotSCV} \widehat{SCV}_t=K^{-1}\sum_{k\in{\mathcal K}_t} \|\Phi_{1k}\|_n^{-2}\left(\tilde x_{1k}\tilde y_{1k}-\eta_{XY}/n\right)\,.
\end{equation}
Since the observation times in ${\mathcal K}_t$ have at most distance $Kh$ to $t$, the approximation error bound for the $\alpha$-H\"older continuous function $\Sigma$ yields a squared bias of
order $\O((Kh)^{2\alpha})$. The variance is $\O(K^{-1})$ for $h\gsim n^{-1/2}$, and we obtain for the rate-optimal choices $h\sim n^{-1/2}$, $K\sim n^{\alpha/(2\alpha+1)}$, a root mean squared error of order $\O(n^{-\alpha/(4\alpha+2)})$. Standard nonparametric techniques based on Gaussian measure concentration then even give the same rate times a log-factor in $n$ for uniform loss in $t$, i.e.
\[ \E\left[\sup_{t\in[0,1]}|\widehat{SCV}_t-SCV_t|\right]=\O\left((n/\log n)^{-\alpha/(4\alpha+2)}\right).\]

For estimation of the integrated covolatility we are not content with rate-optimality, but we also want to minimize the asymptotic variance. By independence we gain in efficiency by using on each block a convex combination of the estimators over all frequencies $j$. In order to estimate the integrated covolatility, we then just sum these estimators over all blocks. We end up with the following spectral estimation approach with local weights $w_{jk}$, satisfying $\sum_j w_{jk}=1$:
\begin{align}
\widehat{IC}_{w,n}^{(SPECV)}=\sum_{k=0}^{h^{-1}-1}h\sum_{j=1}^{nh} w_{jk}\|\Phi_{jk}\|_n^{-2}\left(\tilde x_{jk}\tilde y_{jk}-\eta_{XY}/n\right)\,.
\end{align}
The optimal weights (minimizing the variance) depend on the unknown spot volatility matrix. As will be shown in the proof of Theorem \ref{theo3}, they are given by $w_{jk}^{oracle}=w_j(\Sigma_{kh})\propto (\var(\tilde x_{jk}\tilde y_{jk}))^{-1}$ with
\begin{align}\label{woracle}
&w_j(\Sigma)
=\\
&\notag\frac{\left(\hspace*{-.05cm}\frac{\|\Phi_{jk}\|_n^{-4}}{n^2}(\eta_X^2\eta_Y^2\hspace*{-.01cm}+\hspace*{-.01cm}\eta_{XY}^2)\hspace*{-.05cm}+\hspace*{-.05cm}(1+\rho^2)(\sigma^X\sigma^Y)^2
\hspace*{-.05cm}+\hspace*{-.05cm}\frac{\|\Phi_{jk}\|_n^{-2}}{n}\hspace*{-.05cm}\big((\sigma^X)^2\eta_{Y}^2\hspace*{-.05cm}+\hspace*{-.05cm}(\sigma^Y)^2\eta_{X}^2\hspace*{-.05cm}+\hspace*{-.05cm}2\rho\sigma^X\sigma^Y\eta_{XY}\big)\right)^{-1}}
{\sum_{r=1}^{nh}\hspace*{-.1cm}\left(\frac{\|\Phi_{rk}\|_n^{-4}}{n^2}\hspace*{.05cm}\eta_X^2\eta_Y^2\hspace*{-.01cm}+\hspace*{-.01cm}\eta_{XY}^2\hspace*{-.05cm}+\hspace*{-.05cm}(1\hspace*{-.05cm}+\hspace*{-.05cm}\rho^2)(\sigma^X\sigma^Y)^2\hspace*{-.05cm}
+\hspace*{-.05cm}\frac{\|\Phi_{rk}\|_n^{-2}}{n}\hspace*{-.05cm}\big(\hspace*{-.05cm}(\sigma^X)^2\eta_{Y}^2\hspace*{-.05cm}+\hspace*{-.05cm}(\sigma^Y)^2\eta_{X}^2\hspace*{-.05cm}+\hspace*{-.05cm}2\rho\sigma^X\sigma^Y\eta_{XY}\big)\hspace*{-.05cm}\right)^{-1}}.
\end{align}
They give rise to the oracle version of our spectral estimator of covolatility (SPECV)
\begin{equation}\label{SPECVoracle}
\widehat{IC}_{oracle,n}^{(SPECV)}=\sum_{k=0}^{h^{-1}-1}h\sum_{j=1}^{nh} w_j(\Sigma_{kh})\|\Phi_{jk}\|_n^{-2}\left(\tilde x_{jk}\tilde y_{jk}-\eta_{XY}/n\right)\,.
\end{equation}

Using adequate consistent pilot estimates, we obtain a feasible estimator which is asymptotically as efficient as the oracle estimator. Besides \eqref{PilotSCV} we need the corresponding estimators for the spot volatilities $(\sigma_t^X)^2,(\sigma_t^Y)^2$:
\begin{equation}\label{PilotSV} \widehat{(\sigma_t^X)^2}=K^{-1}\sum_{k\in{\mathcal K}_t} \|\Phi_{1k}\|_n^{-2}\left(\tilde x_{1k}^2-\eta_X^2/n\right),\quad \widehat{(\sigma_t^Y)^2}=K^{-1}\sum_{k\in{\mathcal K}_t} \|\Phi_{1k}\|_n^{-2}\left(\tilde y_{1k}^2-\eta_Y^2/n\right),
\end{equation}
which also satisfy
\[ \E\left[\sup_{t\in[0,1]}\left(\big|\widehat{(\sigma_t^X)^2}-(\sigma^X_t)^2\big|+\big|\widehat{(\sigma_t^Y)^2}-(\sigma^X_Y)^2\big|\right) \right]=\O\left((n/\log n)^{-\alpha/(4\alpha+2)}\right)
\]
for $h\sim n^{-1/2}$, $K\sim n^{\alpha/(2\alpha+1)}$. In particular, all estimators are uniformly (in $t$) consistent provided the mesh $n^{-1}$ of the sample size tends to zero.

The resulting adaptive spectral estimator of covolatility (SPECV) for the integrated covolatility is
\begin{align}\label{SPECV}\widehat{IC}_n^{(SPECV)}=\sum_{k=0}^{h^{-1}-1}h\sum_{j=1}^{nh}
w_{j}\left(\widehat\Sigma_{\lfloor kh\rfloor_r}\right)\|\Phi_{jk}\|_n^{-2}\left(\tilde x_{jk}\tilde{y}_{jk}-\eta_{XY}/n\right)\end{align}
with the pilot estimator $\widehat\Sigma_{\lfloor t\rfloor_r},0\le r\le r^{-1}-1$ from \eqref{PilotSCV} and \eqref{PilotSV} for the spot covolatility matrix $\Sigma_t$, evaluated on coarser blocks of length $r>h$ with $r/h\in\mathds{N}$, inserted into the oracle weight formula \eqref{woracle}.

The piecewise constant pre-estimation step of the spot volatility matrix is performed from the same data set as the final estimator, but with respect to a coarser grid with block lengths $r\rightarrow 0$ such that \(\sqrt{r}/(n/\log n)^{-\alpha/(4\alpha+2)}\to\infty\), e.g. $r\thicksim n^{-1/4}$. This allows us to treat in the proof of \eqref{clt2} the dependence of pilot and final estimator by a tightness argument, which shows that the adaptive estimator is quite robust towards errors in the weights. Note also that this coarse grid approximation reduces the computational costs in the pre-estimation step. In the Appendix \ref{sec:app2} we also learn that high spectral frequencies have decreasing weights and exceeding some threshold will asymptotically not contribute to the estimation. For a tractable application of the SPECV it suffices to sum up frequencies in \eqref{SPECV} only up to a spectral cut-off $J_n\ll nh$. We refer to \cite{reiss} for more information on the cut-off value.

\begin{theo}\label{theo3}
We observe from model \eqref{E0} with $\Sigma \in C^{\alpha}(R),R>0,\alpha>1/2$ and $\underline\Sigma>0$. Choose $h\sim n^{-1/2}\log{(n)}$ and suppose that $\hat\Sigma_t$ is a uniformly consistent estimator of the spot volatility matrix in the sense that $\|\hat \Sigma-\Sigma\|_\infty=o_P(\delta_n)$ for some sequence $\delta_n\to 0$, e.g. based on \eqref{PilotSCV} and \eqref{PilotSV}. Choose a coarse block length $r$ with $r\to 0$ and $\sqrt{r}/\delta_n\to\infty$.
Then both, the adaptive and the oracle SPECV estimator, satisfy the same central limit theorem:
\begin{align}\label{clt}
n^{\nicefrac{1}{4}}\left(\widehat{IC}_{oracle,n}^{(SPECV)}-\int_0^1\rho_t\sigma_t^X\sigma_t^Y\,dt\right)&\rightsquigarrow \mathbf{N}\left(0,(\eta_X^2\eta_Y^2+\eta_{XY}^2)^{1/4}\int_0^1\mathfrak{v}_s\,ds\right),\\
\label{clt2}n^{\nicefrac{1}{4}}\left(\widehat{IC}_n^{(SPECV)}-\int_0^1\rho_t\sigma_t^X\sigma_t^Y\,dt\right)&\rightsquigarrow \mathbf{N}\left(0,(\eta_X^2\eta_Y^2+\eta_{XY}^2)^{1/4}\int_0^1\mathfrak{v}_s\,ds\right),
\end{align}
with local variance
\begin{align}\label{avarllrc}
\mathfrak{v}_t=\sqrt{2(A_t^2-B_t)B_t}\left(\sqrt{A_t+\sqrt{A_t^2-B_t}}-\operatorname{sgn}
\left(A_t^2-B_t\right)\sqrt{A_t-\sqrt{A_t^2-B_t}}\right)^{-1}\,
\end{align}
and $A_t=1/\sqrt{\eta_X^2\eta_Y^2+\eta_{XY}^2}\left(\eta_Y^2\left(\sigma_t^X\right)^2+\eta_X^2\left(\sigma_t^Y\right)^2+2\rho_t\sigma_t^X\sigma_t^Y\eta_{XY}\right)$ and $B_t=4\left(\sigma_t^X\sigma_t^Y\right)^2(1+\rho_t^2)$.
\end{theo}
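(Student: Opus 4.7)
The plan is first to transfer the problem to the simpler model $(\mathcal{E}_3)$. Theorem \ref{theo1}, Theorem \ref{theo2} and Proposition \ref{propE3} together imply that experiments $(\mathcal{E}_0)$ and $(\mathcal{E}_3)$ are asymptotically equivalent for the chosen rates; since weak convergence of statistics is preserved by the Markov kernels realising Le Cam equivalence, it suffices to prove both central limit theorems inside $(\mathcal{E}_3)$. There, by \eqref{nov} combined with Proposition \ref{propsbp}, the bivariate Gaussian vectors $(\tilde x_{jk},\tilde y_{jk})$ are \emph{independent across blocks $k$ and frequencies $j$}, which is the structural backbone of the entire analysis.

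For the oracle statement \eqref{clt}, I decompose the error as
\begin{align*}
\widehat{IC}_{oracle,n}^{(SPECV)}-\int_0^1\rho_t\sigma_t^X\sigma_t^Y\,dt
= \underbrace{h\sum_{k=0}^{h^{-1}-1}\rho_{kh}\sigma_{kh}^X\sigma_{kh}^Y-\int_0^1\rho_t\sigma_t^X\sigma_t^Y\,dt}_{=:B_n} + \underbrace{\sum_{k=0}^{h^{-1}-1}\zeta_k}_{=:S_n},
\end{align*}
with centred block contributions $\zeta_k := h\sum_{j=1}^{nh} w_j(\Sigma_{kh})\|\Phi_{jk}\|_n^{-2}\bigl(\tilde x_{jk}\tilde y_{jk}-\E[\tilde x_{jk}\tilde y_{jk}]\bigr)$, after exploiting $\sum_j w_j(\Sigma_{kh})=1$ and $\E[\tilde x_{jk}\tilde y_{jk}]=\eta_{XY}/n+\|\Phi_{jk}\|_n^2\rho_{kh}\sigma_{kh}^X\sigma_{kh}^Y$. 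The Riemann-approximation bias satisfies $B_n=\O(h^\alpha)=\KLEINO(n^{-1/4})$ thanks to $\alpha>1/2$ and $h\sim n^{-1/2}\log n$. For the stochastic part I invoke Isserlis' formula $\var(\tilde x_{jk}\tilde y_{jk})=\var(\tilde x_{jk})\var(\tilde y_{jk})+\cov(\tilde x_{jk},\tilde y_{jk})^2$, plug in \eqref{nov}, and use the variance-minimising property of $w_j(\Sigma_{kh})\propto \|\Phi_{jk}\|_n^4/\var(\tilde x_{jk}\tilde y_{jk})$ to collapse the per-block variance to the harmonic sum $\var(\zeta_k)=h^2\bigl(\sum_j[\|\Phi_{jk}\|_n^{-4}\var(\tilde x_{jk}\tilde y_{jk})]^{-1}\bigr)^{-1}$. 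Applying \eqref{empnorm} and approximating the sum over $j$ by a Riemann integral in $u=j/(nh)$, followed by the change of variables $\tau=4n\sin^2(u\pi/2)$ and a rescaling that balances the noise and signal contributions in \eqref{woracle}, the explicit expression $\mathfrak{v}_{kh}$ in \eqref{avarllrc} drops out of an elementary integral of the form $\int_0^\infty (s^2+As+1)^{-1}\,ds/\sqrt{s}$. Since there are $h^{-1}\sim n^{1/2}/\log n$ independent summands $\zeta_k$, the Lyapunov condition is readily verified for quadratic forms of Gaussians (fourth moment bounded by a constant times the squared variance), yielding \eqref{clt}.

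The main obstacle is \eqref{clt2}, i.e.\ the passage from the oracle to the adaptive version. By Slutsky it suffices to show that the two estimators differ by $\KLEINO_\P(n^{-1/4})$. Starting from \eqref{woracle}, a direct calculation shows that $\Sigma\mapsto w_j(\Sigma)$ is Lipschitz on $\{\Sigma\ge\underline{\Sigma}E_2\}$ with a constant depending polynomially on $\underline{\Sigma}^{-1}$ and uniform in $j$, giving the deterministic perturbation bound $|w_j(\widehat{\Sigma}_{\lfloor kh\rfloor_r})-w_j(\Sigma_{kh})|=\O_\P(\delta_n+r^\alpha)$. The delicate point is that $\widehat{\Sigma}$ is constructed from the same observations as the final estimator, so inserting it creates a coupling between weights and data. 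Here the coarse block length $r$ is crucial: the pilot is piecewise constant on scale $r$ and averages over $\sim r/h$ fine blocks, so inside any fixed coarse block the perturbed weights may be treated as a random but block-constant multiplier. A tightness argument for the partial-sum process $k\mapsto\sum_{k'\le k}\zeta_{k'}$, combined with the Lipschitz bound and the assumption $\sqrt{r}/\delta_n\to\infty$, then shows that the contribution of the pilot error to the $n^{1/4}$-scaled deviation is $\KLEINO_\P(1)$, so that the adaptive estimator inherits the oracle limit, and \eqref{clt2} follows.
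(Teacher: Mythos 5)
Your treatment of the oracle CLT \eqref{clt} follows the paper's route essentially step by step: reduction to $(\mathcal{E}_3)$ via Proposition \ref{propE3}, the bias/fluctuation split with $B_n=\O(h^\alpha)=\KLEINO(n^{-1/4})$, the harmonic-sum form $(\sum_j I_{jk})^{-1}$ of the per-block variance under the optimal weights, the Riemann-sum-to-integral passage, and a Lyapunov CLT for the independent blocks. Your evaluation of the limiting integral by substituting $s\propto\sin^2$ to turn the quartic denominator into a quadratic one times $ds/\sqrt{s}$ is a legitimate alternative to the paper's residue computation and would reproduce \eqref{avarllrc}.

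The adaptive statement \eqref{clt2} is where your argument has a genuine gap, in two places. First, a Lipschitz constant for $\Sigma\mapsto w_j(\Sigma)$ that is merely \emph{uniformly bounded in $j$} is not sufficient. The centred statistics $\|\Phi_{jk}\|_n^{-2}\left(\tilde x_{jk}\tilde y_{jk}-\E[\tilde x_{jk}\tilde y_{jk}]\right)$ have variances $I_{jk}^{-1}\asymp 1+(j/h_0)^4$ with $h_0=\sqrt{n}h\,(\eta_X^2\eta_Y^2+\eta_{XY}^2)^{-1/4}$, which grow like $n^2$ at $j=nh$; a weight perturbation of uniform size $\delta_n$ across all $nh$ frequencies therefore produces a per-block variance contribution of order $\delta_n^2\,(nh)\,(nh/h_0)^4$, which is nowhere near small enough. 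What is needed, and what the paper proves, is the $j$-decaying gradient bound $|\nabla w_j(\Sigma)|\lesssim h_0^{-1}(1+j^4/h_0^4)^{-1}$, whose decay exactly offsets the growth of $I_{jk}^{-1}$ and yields a per-block variance increment $\lesssim\delta_n^2 h_0^{-1}$, hence an overall contribution $\delta_n^2\,\KLEINO(n^{-1/2})$. Second, tightness of the partial-sum process $k\mapsto\sum_{k'\le k}\zeta_{k'}$ does not resolve the coupling between the pilot $\hat\Sigma$ and the data: since $\hat\Sigma$ may depend on the observations in an arbitrary way, one must control the fluctuation \emph{uniformly over all admissible perturbations of the weights}. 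The paper does this by establishing, via Kolmogorov's criterion and the gradient bound, tightness in $C(Sym(1)\to\R,\|\cdot\|_\infty)$ of the process $M\mapsto X_n(M)$ obtained by inserting $\Sigma_{mr}+\sqrt{r}M$ into the weights on each coarse block, and then evaluating at $M_n=r^{-1/2}(\hat\Sigma_{mr}-\Sigma_{mr})$ with $\|M_n\|\le r^{-1/2}\delta_n\to 0$; this is precisely where $\sqrt{r}/\delta_n\to\infty$ enters. You name the right ingredients (the coarse grid, a tightness argument, the rate condition on $r$), but the tightness is taken over the wrong index set and the perturbation bound lacks the necessary frequency decay, so as written the passage from oracle to adaptive does not close.
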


We give a complete overview on the estimation of the (co)volatility matrix here by the according univariate estimator for the integrated volatilities:
\begin{align}\label{SPEV}\widehat{IV}_n^{(SPEV)}=\sum_{k=0}^{h^{-1}-1}h
\sum_{j=1}^{nh}w_{j}^X\left(\hat\sigma_{\lfloor kh\rfloor_r}^X\right)\|\Phi_{jk}\|_n^{-2}\left(\tilde x_{jk}^2-\eta_{X}^2/n\right)\,,\end{align}
which we call SPEV, with the oracle weights
\begin{align*}w^X_{j}\left(\sigma^X\right)=\frac{\left(\|\Phi_{jk}\|_n^{-2}(\eta_X^2/n)+(\sigma^X)^2\right)^{-2}}
{\sum_{l=1}^{nh}\left(\|\Phi_{lk}\|_n^{-2}(\eta_X^2/n)+(\sigma^X)^2\right)^{-2}}\end{align*}
and analogously for $\tilde Y$.

In general, the noise levels $\eta_X,\eta_Y,\eta_{XY}$ are unknown, but they can be estimated with faster rate $\sqrt{n}$, cf.\,Remark 1. A result with pre-estimated error covariance matrix $\widehat {\mathbf{H}}$ can be derived as for the pre-estimated $\Sigma_{kh}$ above. Furthermore, it is of high practical interest to study how our covolatility estimator behaves under vanishing microstructure noise level, i.e. in the case ${\mathbf H} =0$. In that case the oracle weights all equal $w_{jk}=1/(nh)$ and on each block we estimate the block covolatility by the sum
\[(nh)^{-1}\sum_{j=1}^{nh}\|\Phi_{jk}\|_n^{-2}\langle \Delta X,n\Phi_{jk}\rangle_n \langle \Delta Y,n\Phi_{jk}\rangle_n
\]
of discrete Fourier coefficients with respect to $(\Phi_{jk})_{1\le j\le nh}$. By Parseval identity this sum is equal to $n\langle \Delta X, \Delta Y\rangle_{nh;k}$. %taken only on the $k$th block, and thus equals $\sum_{i=1}^{nh-1}\Delta X_{(k-1)h+i}\Delta Y_{(k-1)h+i}$.
In conclusion, in the case ${\mathbf H}=0$ and for oracle weights our SPECV estimator reduces to the realized covolatility,  which is the natural estimator in this situation.

%%%%%%%%%%%%%%SECTION 4%%%%%%%%%%%%%%%%%%%%%%%%%%%%%%%%%%%%%%%%%%%%%%%%%%%%%%%%%%%%%%%%%%%%%%%%%%%%%%%%
\section{Discussion and simulations\label{sec:4}}
\begin{figure}[t]
\fbox{\includegraphics[width=7cm]{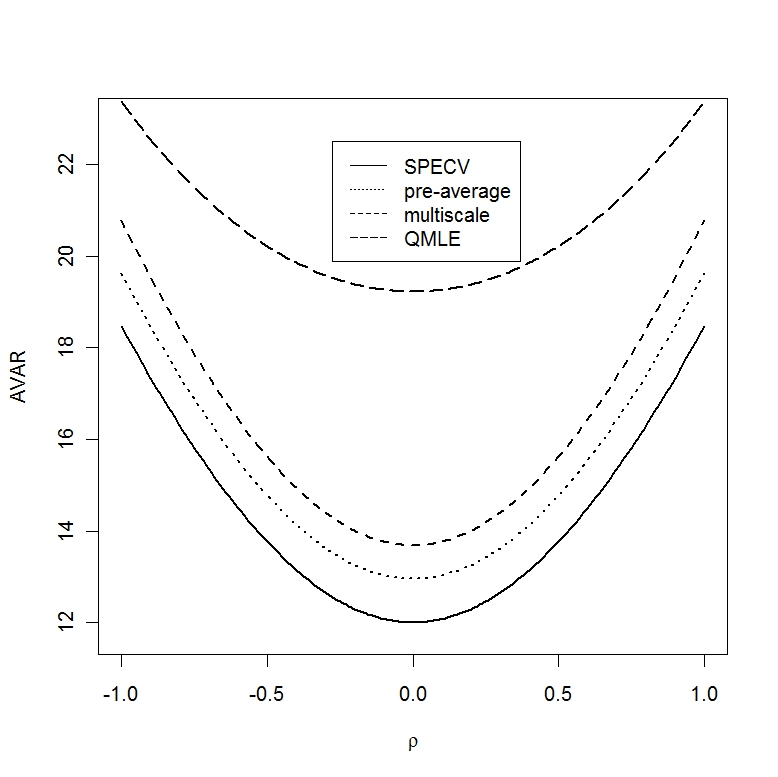}}~~~~\fbox{\includegraphics[width=7cm]{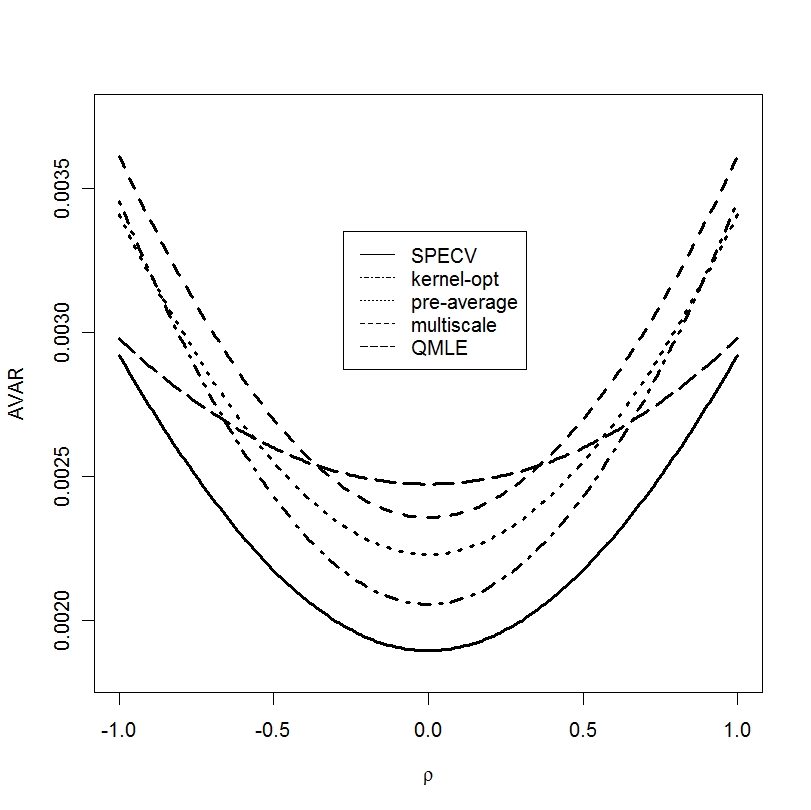}}\noindent
\caption{\label{fig:comp}Asymptotic variances of estimators for the covolatility in specific parametric case (left) and with time varying volatilities (right).}\end{figure}
Previously proposed nonparametric approaches have in common that they are quad\-ratic forms of the observation vectors and when choosing corresponding weights translate into each other and show accordant asymptotic properties. Nevertheless, the methods have been motivated from different points of view. The first two-scales realized volatility (TSRV) approach by \cite{zhangmykland} for the integrated volatility has been grounded on a subsampling method and a bias correction. Disregarding the bias correction, the subsampling estimator is the mean of lower frequent and hence less noise-sensitive realized volatilities. \cite{zhang} has extended this procedure to a linear combination using different time-scales (MSRV). The kernel approach by \cite{bn2} can be viewed as a linear combination of empirical autocovariances. Finally, the pre-average principle by \cite{JLMPV} pursuant to its name incorporates (pre-) averaged weighted observations on blocks. The latter is closest to our methodology, but using one Haar function and only one frequency instead of \eqref{Phi} combined with moving windows.\\
For all three the trade-off between the error due to noise and discretization is handled by choosing a global tuning parameter $c\sqrt{n}$, where $c$ is a constant, minimizing the RMSE to order $n^{-\nicefrac{1}{4}}$. Thus, the optimal convergence rate is attained. If we neglect in support of these methods the possible asymptotic influence of end effects, they have an asymptotic variance structure $\mathfrak{N}c^{-3}+\mathfrak{D}c+\mathfrak{C}c^{-1}$, where the signal part $\mathfrak{D}$ depends in our notation on $\Sigma$, the noise part $\mathfrak{N}$ on $\mathbf{H}$ and the cross term $\mathfrak{C}$ on both. Minimization leads to $c=\left(\left(-\mathfrak{C}+\sqrt{\mathfrak{C}^2+12\mathfrak{N}\mathfrak{D}}\right)/6\mathfrak{N}\right)^{-\nicefrac{1}{2}}$. The oracle solution is proportional to $\eta^{-1}$ for equal noise variances $\eta^2$ of $\tilde X$ and $\tilde Y$. Interestingly, \cite{bn2} have succeeded in the univariate case with constant volatility in approximately attaining the lower bound from \cite{gloter} by a clever selection method for their bandwidth and weights and also a feasible version with Tukey-Hanning kernels comes very close to that bound.
\begin{figure}[t]
\fbox{\includegraphics[width=7cm]{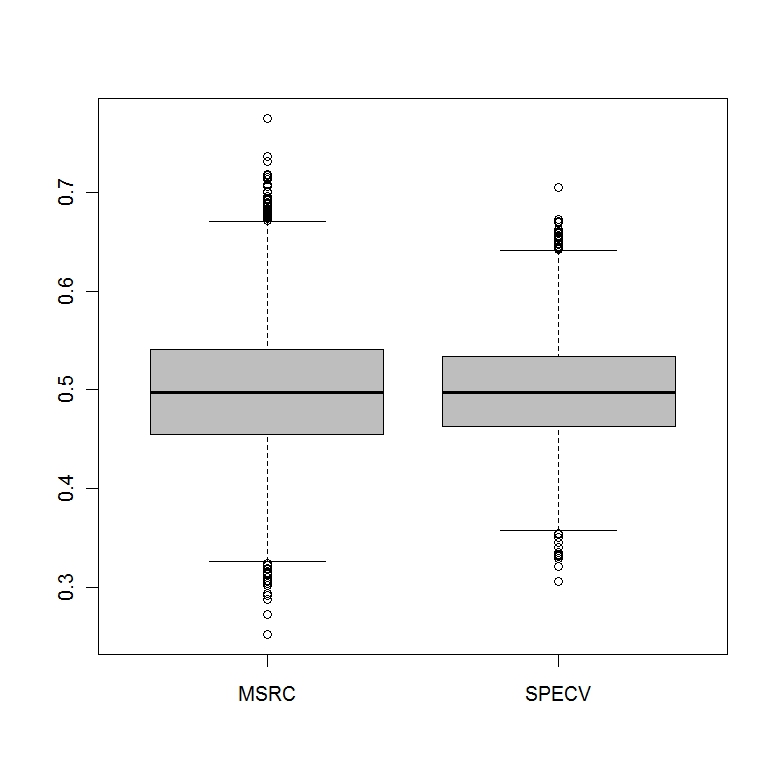}}~~~~\fbox{\includegraphics[width=7cm]{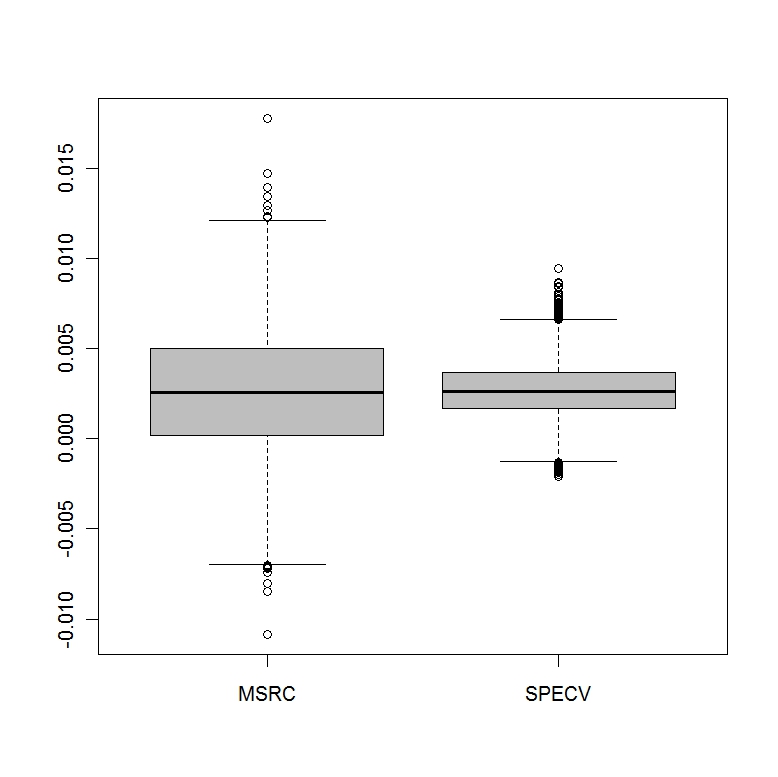}}
\fbox{\includegraphics[width=7cm]{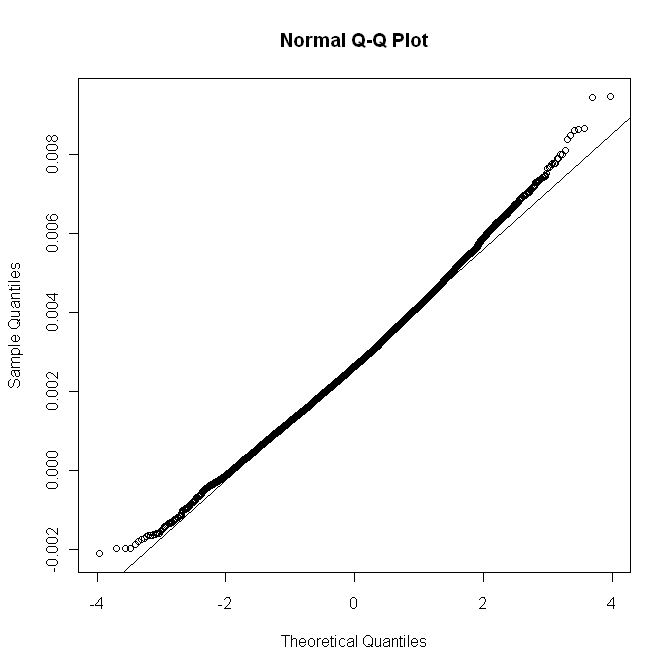}}~~~~\fbox{\includegraphics[width=7cm]{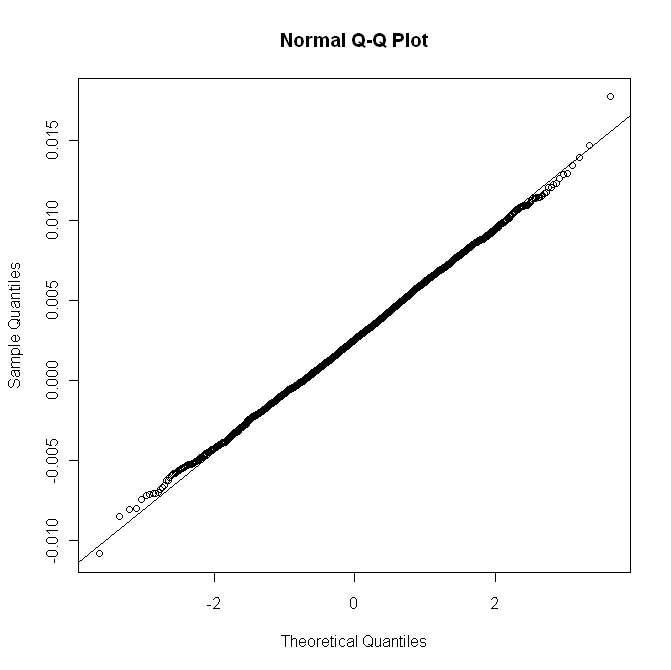}}\noindent
\caption{\label{fig:qqbpl}Boxplots for constant (top left) and time varying (top right) spot correlation and volatilities and normal QQ-Plots for MSRC (bottom left) and SPECV (bottom right) estimates in the time varying setting.}\end{figure}
Essentially, the main difference to our proposed approach is that we do not need to fix a tuning parameter and weights globally -- but are able to adapt weights locally dependent on the observations only on each particular block.\\
We content ourselves with the findings in an idealized statistical model which gives insight into the fundamental structure of the estimation problem. Note, that an i.\,i.\,d.\,assumption on the noise and Hölder-continuity conditions on the volatility processes are customary in the strand of literature on nonparametric estimation methods. In our opinion, it is convenient to look at methods derived from a simple model and inspect the effect of misspecification on them. In the microstructure noise setup, we might first think of a diffusion with constant parameters. \cite{xiu} has taken a path in this vein with reviving the classical MLE in this framework and proving its robustness to a typical nonparametric setup. A local parametric approach is more flexible and increases in general the performance. More surprising than the accordance of asymptotic properties for the aforementioned three nonparametric methods, is that \cite{xiu} reports that the Quasi-MLE approach is in this sense asymptotically equivalent to the kernel approach as well. This is not the case for our SPEV/SPECV approach what underlines the originality of our local spectral estimation method. Extensions of the theory that investigate the properties of the SPEV/SPECV in more general models, e.g. incorporating stochastic volatility and non-Gaussian errors, remain an open task for further research. For the moment the simple structure of SPEV/SPECV makes us confident that it can be robust to much more general model specifications.\\[1cm]
Let us give concrete examples to compare the asymptotic variances of our SPECV and the other methods. For the simple parametric setting with constant $\sigma^X=2$ and $\sigma^Y=1$ and $\eta\:=\eta_X=\eta_Y=1$, in Figure \ref{fig:comp} we depict the theoretical asymptotic variances for $\rho\in(-1,1)$ of the SPECV from Theorem \ref{theo3}, of the multi-scale realized covariance as deduced in \cite{bibinger2}, of the pre-average estimator as given in \cite{kinnepoldi} and of the QMLE from \cite{sahalia}, all with an optimal oracle tuning parameter selected as described above. The asymptotic variances are proportional to $\eta$, so that Figure \ref{fig:comp} rescaled by $\eta$ is meaningful for arbitrary noise levels. The SPECV has the smallest asymptotic variance and the QMLE the largest in this particular setup, all to the same optimal rate of convergence. The kernel method according to \cite{bn1} is not included, since the multivariate version has a non-optimal $n^{\nicefrac{1}{5}}$-rate by oversmoothing to the benefit of positive semi-definiteness. We stress that we intentionally have picked unequal constant volatilities here to the disadvantage of the QMLE which relies on the polarization identity. For equal volatilities $\sigma^X=\sigma^Y$ in our model $X+Y$ and $X-Y$ are independent and the polarized QMLE (also a polarized SPEV concurrent to the SPECV) will not suffer a disadvantage by polarization. Both, the QMLE and the SPECV exhibit asymptotic efficiency for $\rho=0$ in this setting. The approach presented in \cite{bn2} to derive asymptotic efficiency in the one-dimensional parametric case can easily be extended to a bivariate synchronous setting and renders a rate-optimal approach, asymptotically efficient for $\rho=0$ as well. Yet, none of these estimators is asymptotically efficient on the whole parameter space $(\rho,\sigma^X,\sigma^Y)\in\left((0,1),\mathds{R}_+,\mathds{R}_+\right)$ and it is beyond the scope of this article to finish the intricate quest for a globally multi-dimensional asymptotically efficient estimator, which can not be of the type of a smoothed realized covariance as all the above mentioned estimators. Note that in the one-dimensional setup the SPEV estimator \eqref{SPEV} by \cite{reiss} is even nonparametrically asymptotically efficient.\\ Here we aim at providing with SPECV a method that performs well for time varying functions by local adaptivity and thus focus on that setting in the following. For this purpose we compare asymptotic variances in the same spirit for $\rho\in(-1,1)$ and
\begin{align*}\sigma_t^X&=0.1-0.08\cdot\sin{\left(\pi t\right)},\,t\in[0,1]\,,\\
\sigma_t^Y&=0.15-0.07\cdot\sin{\left((6/7)\cdot  \pi t\right)},\,t\in[0,1]\,,\end{align*}
which will as well be considered for the simulation part below and captures the feature of higher volatilities at opening and closing. We add the theoretical asymptotic variance of a simple extension of the optimal kernel estimator for integrated volatility from \cite{bn2}, which can be approximated by Tukey-Hanning kernels. This approach features the smallest asymptotic variance in a wide domain of $\rho$ among the compared non-locally adaptive methods. Even so, the SPECV clearly comes below this benchmark. The right display of Figure \ref{fig:comp} shows that the gains of SPECV compared to the previously proposed methods are much more distinctively than in the parametric case. After this theoretical comparison and the conclusion that the SPECV is preferable, especially in the general nonparametric setting, we shed light on the finite sample size behaviour of our approach in a Monte Carlo study. \\[.2cm]
In the first simulation, we compare the SPECV with the multiscale realized covariance (MSRC), both with an oracle choice of weights and tuning parameter, respectively. First, we implement a simple parametric model with $n=30 000$ equidistant observations of $\tilde X$ and $\tilde Y$, where $\sigma^X=\sigma^Y=1,\rho=1/2$ and noise levels $\eta_X=\eta_Y=0.1$. The implemented MSRC as given in \cite{bibinger} is for synchronous observations a direct extension of the MSRV by \cite{zhang} and translates asymptotically to the kernel estimator with a cubic kernel. It is known to have a good finite sample size behavior. We implement the SPECV with an adequate heuristic choice $h=1/30$ such that $nh=1000$.\\
The empirical distribution of the estimates from 10 000 MC iterations are visualized in a boxplot in Figure \ref{fig:qqbpl}. The SPECV estimates have an empirical variance of $0.49\cdot  n^{-\nicefrac{1}{2}}$ and the MSRC of $0.71\cdot n^{-\nicefrac{1}{2}}$. The empirical finding is that in this setting the SPECV is closer to its theoretical asymptotic variance of about $0.46$ than the MSRC to its theoretical value of $0.52$.\\
Our main focus will be the time-varying nonparametric case. For an example of deterministic time-varying functions, set
\allowdisplaybreaks{
\begin{align*}\sigma_t^X&=0.1-0.08\cdot\sin{\left(\pi t\right)},\,t\in[0,1]\,,\\
\sigma_t^Y&=0.15-0.07\cdot\sin{\left((6/7)\cdot  \pi t\right)},\,t\in[0,1]\,,\\
\rho_t&=0.5+0.01\cdot\sin{\left( \pi t\right)},\,t\in[0,1]\,,\end{align*}}
where the volatilities are higher at the beginning and end of the observed interval and the correlation is only slowly varying, which mimics the basic realistic features. We keep the noise levels $\eta_X=\eta_Y=0.1$ fixed and rather high compared to the signal part. The known integrated covolatility equals 0.00269 here. Since the noise level is high and dominates the signal part, the frequencies chosen according to the above given selection rule for the MSRC estimator become large (over 1 000) and the computing time increases for these kind of nonparametric estimators. As can be seen in the right boxplot of Figure \ref{fig:qqbpl}, the SPECV outperforms the MSRC for non-constant volatilities and correlation more clearly. This confirms that the spectral local technique is more adequate to capture the effect of time-varying volatilities by local adaptation, not only theoretically but significantly in the finite sample case. The QQ-Plots in Figure \ref{fig:qqbpl} inspect the normal approximation for the two estimators from this Monte Carlo study in the time varying case.\\
We conclude the simulation study with an implementation of the adaptive SPEV/SPECV. We use piecewise constant pilot estimators \eqref{PilotSCV} and \eqref{PilotSV} for $\Sigma$ at times $l\cdot r=l\cdot 3h,\,l=0,\ldots,10$, each smoothed with $K=5$ adjacent blocks.\\
The 10 000 MC estimates of the adaptive SPECV are illustrated in Figure \ref{fig:bp}. Table \ref{stab1} summarizes the root mean squared errors of all three adaptive SPEV/SPECV estimators and the oracle SPECV and the oracle MSRC. The performance of the adaptive version of SPECV can not keep up with the oracle version, but in our simulation it is still slightly better than the oracle MSRC. For an adaptive MSRC the root mean squared error will clearly become larger and we refer to \cite{bibinger2} for the method and simulation results pertaining this point.
\begin{figure}
\centering
%\addtocounter{figure}{+1}
\renewcommand{\figurename}{Table}
\caption{\label{stab1}Comparison of root mean squared errors of MC (co-)volatility estimates.}
\renewcommand{\arraystretch}{2.25}
\parbox{2.5in}{\begin{tabular}{|c|c|}\hline
Estimator & \textbf{RMSE}\\
\hline
$\widehat{IV}_n^{(SPEV)}\,\mbox{for}\,\int_0^1\left(\sigma_t^X\right)^2\,dt$ & 0.0072\\
$\widehat{IV}_n^{(SPEV)}\,\mbox{for}\,\int_0^1\left(\sigma_t^Y\right)^2\,dt$ & 0.0086\\
$\widehat{IC}_n^{(SPECV)}\,\mbox{for}\,\int_0^1\rho_t\sigma_t^X\sigma_t^X\,dt$ & 0.0034\\
$\widehat{IC}_{oracle}^{(MSRC)}\,\mbox{for}\,\int_0^1\rho_t\sigma_t^X\sigma_t^X\,dt$ & 0.0035\\
$\widehat{IC}_{oracle,n}^{(SPECV)}\,\mbox{for}\,\int_0^1\rho_t\sigma_t^X\sigma_t^X\,dt$ & 0.0015\\
\hline
\end{tabular}}
\begin{minipage}{2.75in}
\includegraphics[width=7.00cm]{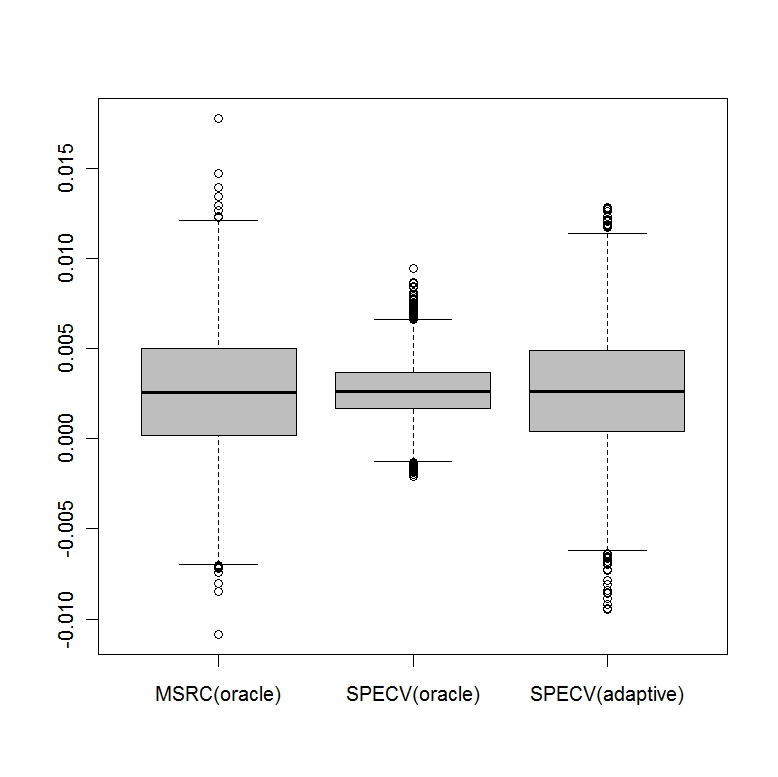}
\end{minipage}
\renewcommand{\figurename}{Figure}
\caption{\label{fig:bp}Boxplot of 10000 MC iterations of the oracle MSRC and oracle/adaptive SPECV.}
\end{figure}

\newpage
\appendix
%%%%%%%%%%%%%%%%%%%%APPENDIX 1%%%%%%%%%%%%%%%%%%%%%%%%%%%%%%%%%%%%%%
\section{Appendix: Proofs of asymptotic equivalence\label{sec:app1}}
\subsection*{Proof of Theorem \ref{theo1}}

\begin{comment}
First, we capture the influence of non-equidistant observation schemes satisfying Assumption \ref{sampling}. It turns out that a transformation of the sampling points is equivalent to observing equidistantly in an experiment with transformed volatility matrix. Thus, we are able to continue afterwards with the treatment of the equidistant setup.
\begin{prop}\label{timechange}The asymptotic equivalence between the experiments \eqref{E0} and \eqref{E1} holds true on Assumption \ref{sampling}, when it is true for the experiment with equidistant discrete observations and transformed volatility matrix $\Sigma^F=\left(\Sigma\circ {F^{-1}}\right)(F^{-1})^{\prime}$.
\end{prop}
\begin{proof}
Note that $\left(\mathbf{Z}\circ{F^{-1}}\right)$ has independent increments. Applying It\^{o} isometry, the covariance function is
\begin{align*}\E\left[\left(\mathbf{Z}\circ{F^{-1}(u)}\right)\left(\mathbf{Z}\circ{F^{-1}(u)}\right)^{\top}\right]&=\E\left[\int_0^{F^{-1}(u)}\Sigma_s^{\nicefrac{1}{2}}\,d\mathbf{B}_s\left(\int_0^{F^{-1}(u)}\Sigma_s^{\nicefrac{1}{2}}\,d\mathbf{B}_s\right)^{\top}\right]\\
&=\int_0^{F^{-1}(u)}\Sigma_s\,ds=:\mathbf{A}_F(u)\,.\end{align*}
Therefore, we can write
\begin{align*}\left(\mathbf{Z}\circ{F^{-1}(u)}\right)\stackrel{d}{=}\int_0^u\left({\mathbf{A}_F}^{\prime}\right)^{\nicefrac{1}{2}}(s)\,d\mathbf{B}_s=\int_0^u\Sigma_{F^{-1}(s)}^{\nicefrac{1}{2}}\left((F^{-1})^{\prime}(s)\right)^{\nicefrac{1}{2}}\,d\mathbf{B}_s\end{align*}
and deduce the above given transformation.
\end{proof}
\end{comment}

We start with the constructive proof that \eqref{E0} is at least as informative as \eqref{E1}.
We use the linear B-splines
\begin{align*}b_i(t)=\1_{[\frac{i-1}{n}\,,\,\frac{i+1}{n}]}(t)\min{\left(1+n\left(t-\frac{i}{n}\right)\,,\,1-n\left(t-\frac{i}{n}\right)\right)}\,,\end{align*}
i.\,e.\,$\operatorname{supp} b_i=\left[(i-1)/n,(i+1)/n\right], b_i(i/n)=1$, and $b_i$ linear on $\left[(i-1)/n,i/n\right]$ and $\left[i/n,(i+1)/n\right]$. Consider the centred Gaussian process $\hat{\mathbf{Z}}$ defined by
\begin{align}\hat{\mathbf{Z}}_t=\sum_{i=1}^{n}\tilde {\mathbf{Z}}_i b_i(t)=\sum_{i=1}^n\mathbf{Z}_{\frac{i}{n}}b_i(t)+\sum_{i=1}^n\mathbf{\eps}_i b_i(t)\,.\end{align}
The covariance function of $\hat{\mathbf{Z}}$ is
\begin{align*}\E\left[\hat{\mathbf{Z}}_t{\hat{\mathbf{Z}}_s}^{\top}\right]=\sum_{i,j=1}^n\mathbf{A}\left(\frac{i\wedge j}{n}\right)b_i(t)b_j(s)+\mathbf{H}\sum_{i=1}^n b_i(t)b_i(s)\end{align*}
with
\begin{align*}\mathbf{A}(t)\:=\int_0^t\Sigma_s\,ds=\left(\begin{array}{cc} \int_0^t(\sigma_s^X)^2\,ds & \int_0^t\rho_s\sigma_s^X\sigma_s^Y\,ds\\ \int_0^t\rho_s\sigma_s^X\sigma_s^Y\,ds& \int_0^t(\sigma_s^Y)^2\,ds\end{array}\right)~\mbox{and}~\mathbf{H}=\left(\begin{array}{cc} \eta_X^2&\eta_{XY}\\ \eta_{XY}&\eta_Y^2\end{array}\right)\,.\end{align*}
For any $\mathbf{f}=\left(f_X,f_Y\right)^{\top} \in L^2\left([0,1],\mathds{R}^2\right)$, we have
\begin{align*}
&\E\left[\langle \mathbf{f},\hat{\mathbf{Z}}\rangle^2\right]=\E\left[\left(\langle f_X,\hat X\rangle+\langle f_Y,\hat Y\rangle\right)^2\right]\\
&~=\E\left[\langle f_X,\hat X\rangle^2\right]+\E\left[\langle f_Y,\hat Y\rangle^2\right]+2\,\E\left[\langle f_X,\hat X\rangle \langle f_Y,\hat Y\rangle\right]\\
&~=\sum_{i,j=1}^nA_{11}\left(\frac{i\wedge j}{n}\right)\langle f_X,b_i\rangle\langle f_X,b_j\rangle+\sum_{i=1}^n\eta_X^2\langle f_X,b_i\rangle^2\\
&~~+\sum_{i,j=1}^nA_{22}\left(\frac{i\wedge j}{n}\right)\langle f_Y,b_i\rangle\langle f_Y,b_j\rangle+\sum_{i=1}^n\eta_Y^2\langle f_Y,b_i\rangle^2\\
&~~+2\,\sum_{i,j=1}^nA_{12}\left(\frac{i\wedge j}{n}\right)\langle f_X,b_i\rangle\langle f_Y,b_j\rangle+2\sum_{i=1}^n\eta_{XY}\langle f_X,b_i\rangle\langle f_Y,b_i\rangle\,.
\end{align*}
The sum of the three terms induced by the observation noise is bounded from above by $n^{-1}(\eta_X^2\|f_X\|^2+\eta_Y^2\|f_Y\|^2+2\,\eta_{XY}\langle f_X,f_Y\rangle)$, since
\begin{align*}&~~~\eta_X^2\sum_{i=1}^n\langle f_X,b_i\rangle^2+\eta_Y^2\sum_{i=1}^n\langle f_Y,b_i\rangle^2+2\eta_{XY}\sum_{i=1}^n\langle f_X,b_i\rangle \langle f_Y,b_i\rangle\\
&~~~=\left(\frac{1}{2}+\frac{\eta_{XY}}{2\eta_X\eta_Y}\right)\sum_{i=1}^n\langle \eta_X f_X+\eta_Y f_Y,b_i\rangle^2+\left(\frac{1}{2}-\frac{\eta_{XY}}{2\eta_X\eta_Y}\right)\sum_{i=1}^n\langle \eta_X f_X-\eta_Y f_Y,b_i\rangle^2\\
&~~~\le \left(\frac{1}{2}+\frac{\eta_{XY}}{2\eta_X\eta_Y}\right)n^{-1}\|\eta_Xf_X+\eta_Yf_Y\|^2+\left(\frac{1}{2}-\frac{\eta_{XY}}{2\eta_X\eta_Y}\right)n^{-1}\|\eta_Xf_X-\eta_Yf_Y\|^2\\
&~~~=n^{-1}\left(\eta_X^2\|f_X\|^2+\eta_Y^2\|f_Y\|^2+2\,\eta_{XY}\langle f_X,f_Y\rangle\right)\,.
\end{align*}
For the upper bound we have used that $\int_0^1nb_i(t)\,dt=1$ implies $\langle f_X,n b_i\rangle^2\le\langle f_X^2,nb_i\rangle$ by Jensen's inequality and $\sum_i b_i\le 1$ and analogously for the other terms. Now observe that $\E\left[\langle f,\mathbf{H}\,d\mathbf{W}\rangle\right]=\E\left[\int f_t^{\top}\mathbf{H}\,d\mathbf{W}_t\right]=(\eta_X^2\|f_X\|^2+\eta_Y^2\|f_Y\|^2+2\,\eta_{XY}\langle f_X,f_Y\rangle)$.\\
As a consequence, observations from $\bar{\mathbf{Z}}$ defined by
\begin{align}\label{iwn}d\bar{\mathbf{Z}}=\sum_{i=1}^n\mathbf{Z}_{\frac{i}{n}}b_i(t)\,dt+\frac{1}{\sqrt{n}}\mathbf{H}^{\nicefrac{1}{2}}\,d\mathbf{W}_t\end{align}
with a two-dimensional standard Brownian motion $\mathbf{W}$ can be generated from \eqref{E0} by adding additional $\mathbf{N}\left(0,\bar C-\hat C\right)$-noise, where $\hat C:L^2\rightarrow L^2$ is the covariance operator of $\hat Z$ and the covariance operator $\bar C:L^2\rightarrow L^2$ associated with \eqref{iwn} is given by
\begin{align*}\bar C \mathbf{f}(t)=\sum_{i,j=1}^n\mathbf{A}\left(\frac{i\wedge j}{n}\right)\langle f,b_j\rangle b_i(t)+n^{-1}\mathbf{H} \mathbf{f}(t)\,,\mathbf{f}\in L^2\left([0,1],\mathds{R}^2\right)\,.\end{align*}
Let $C$ be the covariance operator
\begin{align*}C\mathbf{f}(t)=\int_0^1\left(\int_0^{t\wedge u}\mathbf{A}(s)\,ds\right)\mathbf{f}(u)\,du+n^{-1}\mathbf{H}\mathbf{f}(t)\end{align*}
from \eqref{E1}. In the following $\underline{\mathbf{H}}>0$ denotes the smallest eigenvalue of $\mathbf{H}$ as in Section \ref{sec:2}.
In the extension of the findings for the one-dimensional case, which has been treated in Section \textbf{A.2} in \cite{reiss}, we make use of the convenient upper bound for the squared Hellinger distance between two normal measures by the squared Hilbert-Schmidt norm denoted $\|\,\cdot\,\|_{\text{HS}}$. For a concise introduction on Hellinger distances between Gaussian measures and the Hilbert-Schmidt norm we refer to Section \textbf{A.1} in \cite{reiss}.\\
The asymptotic equivalence of observing $\bar{\mathbf{Z}}$ and $\tilde {\mathbf{Z}}$ in \eqref{E1} is ensured by the Hellinger distance bound
\begin{align*}
\operatorname{H}^2\left(\mathcal{L}\left(\bar{\mathbf{Z}}\right)\,,\,\mathcal{L}\left(\tilde{\mathbf{Z}}\right)\right)&\le 2\,\|C^{-\nicefrac{1}{2}}\left(\bar C-C\right)C^{-\nicefrac{1}{2}}\|^2_{\text{HS}}\\
&\le 2\,\underline{\mathbf{H}}^{-2} n^2\int_0^1\int_0^1\left\|\mathbf{A}(t\wedge s)-\sum_{i,j=1}^n\mathbf{A}\left(\frac{i\wedge j}{n}\right)b_i(t)b_j(s)\right\|^2\,dt\,ds\\
&=\mathcal{O}\left(\underline{\mathbf{H}}^{-2}R^2n^{-(2\alpha\wedge 1)}\right)=\KLEINO(1)~\text{for}~\alpha>0\,.\end{align*}
Note that we have estimated the $L^2$-distance between $\mathbf{A}(t\wedge s)$ and its coordinate-wise linear interpolation by $\mathcal{O}(n^{-1-\alpha})$ using a standard approximation result based on the fact that the function $(t,s)\mapsto \mathbf{A}(t\wedge s)$ lies in the class $C^{1+\alpha}$ away from the diagonal $\{t=s\}$   due to $\mathbf{A}'(t)=\Sigma_t\in C^\alpha$ and is Lipschitz at the diagonal (on the $n-1$ squares $[(i-1)/n,(i+1)/n]$ the pointwise bound $\mathcal{O}(n^{-1})$ only contributes $(n-1)\mathcal{O}(n^{-2})=\mathcal{O}(n^{-1})$ to the squared $L^2$-distance).

The proof that \eqref{E1} is at least as informative as \eqref{E0} is obtained by a similar estimate and a generalization of the construction technique from the one-dimensional setting. For this purpose, set
\begin{align*}\mathbf{Z}^{\prime}_i=n\int_{(2i-1)/2n}^{(2i+1)/2n}\,d\tilde{\mathbf{Z}}_t=n\int_{(2i-1)/2n}^{(2i+1)/2n}\mathbf{Z}_t\,dt+\mathbf{\eps}_i,\,1\le i\le (n-1),\\
\mathbf{Z}^{\prime}_n=2\,n\int_{(2n-1)/2n}^{1}\,d\tilde{\mathbf{Z}}_t=2\,n\int_{(2n-1)/2n}^{1}\mathbf{Z}_t\,dt+\mathbf{\eps}_n,\end{align*}
with
\begin{align*}\mathbf{\eps}_i=\sqrt{n}\int_{(2i-1)/2n}^{(2i+1)/2n}\mathbf{H}^{\nicefrac{1}{2}}\,d\mathbf{W}_t\sim\mathbf{N}(0,\mathbf{H})\,.\end{align*}
The estimate that
\begin{align*}
&\operatorname{H}^2\left(\mathcal{L}\left({\mathbf{Z}_1^{\prime}},\ldots,{\mathbf{Z}_n^{\prime}}\right)\,,\,\mathcal{L}\left(\tilde{\mathbf{Z}}_1,\ldots,\tilde{\mathbf{Z}}_n\right)\right)\le 2\,\|\tilde{C}^{-\nicefrac{1}{2}}\left(C^{\prime}-\tilde{C}\right)\tilde{C}^{-\nicefrac{1}{2}}\|^2_{\text{HS}}\\
&~~\le 2\,\underline{\mathbf{H}}^{-2}\|C^{\prime}-\tilde{C}\|^2_{\text{HS}}=\mathcal{O}\left(\underline{\mathbf{H}}^{-2}R^2n^{-2\alpha}\right)
\end{align*}
establishes the result. Altogether, the Le Cam distance between the experiments \eqref{E0} and \eqref{E1} is of order $\mathcal{O}\left(\underline{\mathbf{H}}^{-1}Rn^{-\alpha}\right)$. Assuming that $\mathbf{A}$ is $(1+\alpha)$-Hölder continuous ($\alpha$-Hölder regularity of the covolatility and volatilities), the asymptotic equivalence of the statistical experiments with discretely observed noisy diffusions and the continuous time white noise model is deduced. $\hfill\Box$

\subsection*{Proof of Theorem \ref{theo2}}

The proof affiliates to the one-dimensional result and its proof in Section \textbf{A.3} of \cite{reiss}. It is shown that the Hilbert-Schmidt norm of the difference between the experiment \eqref{E1} and the one where $\Sigma$ is evaluated at times $\lfloor t\rfloor_h\:=\min{\{k\,h|k\,h\le t\}},\,1\le k\le h^{-1}-1$ tends to zero.\\
In the two-dimensional setting, we have a Hölder bound
\begin{align*}\|\Sigma_t-\Sigma_{\lfloor t\rfloor_h}\|_{\infty}\le Rh^{\alpha}\,,\,t\in[0,1]\,.\end{align*}
Denote $C_{\Sigma}$ the covariance operator associated with the experiment \eqref{E1} with volatility matrix $\Sigma$. For $f\in L^2\left([0,1],\mathds{R}^2\right)$ let $F:[0,1]\rightarrow \mathds{R}^2$ be the corresponding antiderivative with $F(1)=(0,0)^{\top}$. The difference of the two covariance operators of experiments with $\Sigma$ and $\Sigma^h$ where $\Sigma_t^h\:=\Sigma_{\lfloor t\rfloor_h}$, respectively, pertains only the signal part:
\begin{align*}\langle (C_{\Sigma}-C_{\Sigma^h})f,f\rangle&=\int_0^1F_t^{\top}\left(\Sigma_t-\Sigma_t^h\right)F_t\,dt\le \|\Sigma-\Sigma^h\|_{\infty}\langle \mathfrak{C}f,f\rangle
\end{align*}
by partial integration, where $\mathfrak{C}$ denotes the covariance operator of a standard two-dimensional Brownian motion. We end up with the following upper bound for the Hilbert-Schmidt norm:
\begin{align*}\|C_{\Sigma}^{-\nicefrac{1}{2}}(C_{\Sigma^h}-C_{\Sigma})C_{\Sigma}^{-\nicefrac{1}{2}}\|_{\text{HS}}&\le \|\Sigma-\Sigma^h\|_{\infty}\|C_{\Sigma}^{-\nicefrac{1}{2}}\,\mathfrak{C}\,C_{\Sigma}^{-\nicefrac{1}{2}}\|_{\text{HS}}\\
&\le \|\Sigma-\Sigma^h\|_{\infty}\left\|\left(\mathfrak{C}\underline{\Sigma}+\underline{\mathbf{H}}n^{-1}\operatorname{id}\right)^{-\nicefrac{1}{2}}\mathfrak{C}\left(\mathfrak{C}\underline{\Sigma}+\underline{\mathbf{H}}n^{-1}\operatorname{id}\right)^{-\nicefrac{1}{2}}\right\|_{\text{HS}}\\
&\le R h^{\alpha}\|G( \mathfrak{C})\|_{\text{HS}}\,.\end{align*}
The function $G(z)=z\left(z\underline{\Sigma}+\underline{\mathbf{H}}n^{-1}\right)^{-1}$ is applied to $\mathfrak{C}$ employing functional calculus.\\
The operator $\mathfrak{C}$ has the same spectral values as the covariance operator of a one-dimensional standard Brownian motion with double multiplicity. Hence, the result is derived directly from the spectral analysis for the one-dimensional case in \cite{reiss}.
$\hfill\Box$

\subsection{Proof of Proposition \ref{propE3}}

The proof follows exactly along the lines of proof for Theorem \ref{theo1}. The only difference is the bound on the $L^2([0,1]^2)$-distance between the functions $\mathbf{A}(t\wedge s)$ and $\sum_{i,j}\mathbf{A}((i\wedge j)/n)b_i(t)b_j(s)$. Since $\Sigma$ is block-wise constant, $\mathbf A$ is linear on each interval $[(i-1)/n,i/n]$. By the linear interpolation property the two functions coincide on each square $[(i-1)/n,i/n]\times[(j-1)/n,j/n]$ for $i\not= j$. For the $n$ squares where $i=j$, the Lipschitz property of $(t,s)\mapsto {\mathbf A}(t\wedge s)$ yields a total $L^2$-distance of order $n^{-3/2}$ (cf. again proof of Theorem \ref{theo1}) and the bound on the Le Cam distance follows.

%%%%%%%%%%%%%%%%%%%%%%%%%%%%%%%%%%%%%%%%%APPENDIX 2%%%%%%%%%%%%%%%%%%%%%%%%%%%%%%%%%%
\section{Appendix: Asymptotics of the local spectral (co-)volatility estimator\label{sec:app2}}
We start with the following standard formula for a bivariate normal distribution which will be used implicitly several times.
\begin{lem}
For a Gaussian random vector
\begin{align*}\left(\begin{array}{c}X\\ Y\end{array}\right)\sim\mathbf{N}\left(\mathbf{0}\,,\left(\begin{array}{cc}\sigma_X^2&\rho\sigma_X\sigma_Y\\ \rho\sigma_X\sigma_Y& \sigma_Y^2\end{array}\right)\right)\end{align*}
it holds true that \begin{align}\label{nv}\var(X^2Y^2)=(1+\rho^2)\sigma_X^2\sigma_Y^2~~\mbox{and}~~\var(X^2)=2\sigma_X^4,\,\var(Y^2)=2\sigma_Y^4\,.\end{align}
\end{lem}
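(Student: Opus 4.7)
The lemma bundles three second-moment identities for a centred bivariate Gaussian vector, and the universal tool is Isserlis' theorem (the Wick formula), which expresses every joint Gaussian moment as a sum over pairings of the $2\times 2$ covariances.

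\emph{The marginal identities.} The identities $\var(X^2)=2\sigma_X^4$ and $\var(Y^2)=2\sigma_Y^4$ I would dispatch in one line: after rescaling, $X/\sigma_X$ and $Y/\sigma_Y$ are standard normal, their squares are $\chi^2_1$-distributed with variance $2$ (equivalently $E[X^4]=3\sigma_X^4$ by Isserlis, so $\var(X^2)=3\sigma_X^4-\sigma_X^4=2\sigma_X^4$), and the analogue for $Y$ is identical.

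\emph{The mixed identity as printed.} A direct attempt to verify $\var(X^2Y^2)=(1+\rho^2)\sigma_X^2\sigma_Y^2$ would begin with $\var(X^2Y^2)=E[X^4Y^4]-(E[X^2Y^2])^2$ and apply Isserlis to both moments. For the second moment one gets $E[X^2Y^2]=\sigma_X^2\sigma_Y^2(1+2\rho^2)$. For $E[X^4Y^4]$ the Isserlis sum runs over the $7!!=105$ pairings of the eight variables $\{X,X,X,X,Y,Y,Y,Y\}$, organised according to the number of cross-pairs; the result is a polynomial in $\rho$ of degree $4$ scaling as $\sigma_X^4\sigma_Y^4$, not $\sigma_X^2\sigma_Y^2$. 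In particular, the sanity check $X=Y\sim N(0,1)$ collapses the left-hand side to $\var(X^4)=E[X^8]-(E[X^4])^2=105-9=96$, whereas the purported right-hand side equals $2$. Hence the identity as stated is false on dimensional grounds, and this mismatch is the one and only obstacle to a proof.

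\emph{Intended reading.} Since the lemma is invoked repeatedly in Appendix \ref{sec:app2} when computing $\var(\tilde x_{jk}\tilde y_{jk})$ from the joint Gaussian law \eqref{nov} of $(\tilde x_{jk},\tilde y_{jk})$, the content actually needed is $\var(XY)=(1+\rho^2)\sigma_X^2\sigma_Y^2$, and the printed exponents must be a typographical slip. This corrected identity follows immediately: Isserlis yields $E[X^2Y^2]=E[X^2]E[Y^2]+2(E[XY])^2=\sigma_X^2\sigma_Y^2(1+2\rho^2)$, and subtracting $(E[XY])^2=\rho^2\sigma_X^2\sigma_Y^2$ leaves $(1+\rho^2)\sigma_X^2\sigma_Y^2$, as required. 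Beyond diagnosing the misprint no analytical difficulty remains.
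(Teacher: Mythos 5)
Your reading is the right one: the displayed identity is a misprint for $\var(XY)=(1+\rho^2)\sigma_X^2\sigma_Y^2$, and the paper's own proof tacitly confirms this, since it only ever computes $\E[X^2Y^2]=(1+2\rho^2)\sigma_X^2\sigma_Y^2$ and then ``directly concludes'' the lemma --- a step that only makes sense when the target is $\var(XY)=\E[X^2Y^2]-(\E[XY])^2$. Your argument is correct and structurally the same as the paper's: both reduce the claim to the mixed second moment and subtract $(\E[XY])^2=\rho^2\sigma_X^2\sigma_Y^2$. The only difference is the tool used to evaluate $\E[X^2Y^2]$: you invoke Isserlis' theorem, $\E[X^2Y^2]=\E[X^2]\E[Y^2]+2(\E[XY])^2$, whereas the paper writes $Y=\rho(\sigma_Y/\sigma_X)X+\sqrt{1-\rho^2}\,\sigma_Y Z$ with $Z\sim\mathbf{N}(0,1)$ independent of $X$ and conditions on $X$. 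Both are one-line computations; the Wick route generalizes more readily to higher moments, while the regression decomposition avoids citing a named theorem. Your sanity check (taking $X=Y$ standard normal gives $\var(X^4)=96\ne 2$) is a clean certification that the statement as literally printed cannot be correct, which the paper does not bother to record.
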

\begin{proof}
The nature of the Gaussian distribution allows us to write $Y=\rho(\sigma_Y/\sigma_X)X+\sqrt{1-\rho^2}\sigma_Y Z$ where $Z\sim\mathbf{N}(0,1)$ is independent of $X$. Since $\E\left[X^4\right]=3\sigma_X^4$ and
\begin{align*}\E\left[X^2Y^2\right]=\E\left[X^2\E\left[Y^2|X^2\right]\right]&=\E\left[X^2\left(\rho^2(\sigma_Y^2/\sigma_X^2)X^2+(1-\rho^2)\sigma_Y^2\E\left[Z^2\right]\right)\right]\\
&=\rho^2(\sigma_Y^2/\sigma_X^2)\E\left[X^4\right]+(1-\rho^2)\E\left[X^2\right]\sigma_Y^2=(1+2\rho^2)\sigma_X^2\sigma_Y^2\,,\end{align*}
we directly conclude the statement of the Lemma.
\end{proof}
The elementary identities \eqref{nv} and \eqref{sbp} are central tools in the error analysis for the SPECV-estimator. The latter is proved in the following.
\subsection*{Proof of Proposition \ref{propsbp}}
Equation \eqref{sbp} is basically an application of the discrete summation by parts analogue to the integration by parts formula, also called Abel transformation.\\ The elementary identity $\sin{(x+h)}-\sin{x}=2\cos{(x+h/2)}\sin{(h/2)}$ yields:
\begin{align*}\sum_{l=1}^n\Delta \tilde Y_l\Phi_{jk}\left(\frac{l}{n}\right)&=-\sum_{l=1}^{n-1}\tilde Y_{\frac{l}{n}}\left(\Phi_{jk}\left(\frac{l+1}{n}\right)-\Phi_{jk}\left(\frac{l}{n}\right)\right)+\tilde Y_1\Phi_{jk}\left(1\right)-\tilde Y_0\Phi_{jk}\left(n^{-1}\right)\\
&=-\sum_{l=0}^{n-1}\tilde Y_{\frac{l}{n}}\left(\Phi_{jk}\left(\frac{l+1}{n}\right)-\Phi_{jk}\left(\frac{l}{n}\right)\right)\\
&=-\sum_{l=0}^{n-1}\tilde Y_{\frac{l}{n}}\;\varphi_{jk}\left(\frac{l+\frac{1}{2}}{n}\right)\frac{1}{n}\,.\end{align*}
The boundary terms vanish due to $\Phi_{jk}(0)=\Phi_{jk}(1)=0$. Further simple relations for trigonometric functions reveal the orthogonality properties \eqref{o1} and \eqref{o2}.
Without loss of generality, consider the first block $k=0$:
\begin{align*}
\left[\varphi_{j0},\varphi_{r0}\right]_n&=\frac{1}{n}\sum_{l=0}^{nh-1}\frac{2}{h}\cos{\left(j\pi h^{-1}n^{-1}(l+1/2)\right)}\cos{\left(r\pi h^{-1}n^{-1}(l+1/2)\right)}\\
&=\frac{1}{n}\sum_{l=0}^{nh-1}h^{-1}\left(\cos{\left((j+r)\pi h^{-1}n^{-1}(l+1/2)\right)}+\cos{\left((j-r)\pi h^{-1}n^{-1}(l+1/2)\right)}\right)\\
&=\delta_{jr}\,.\end{align*}
The last equality holds since for arbitrary $m\in\mathds{N}$:
\begin{align*}\sum_{l=0}^{nh-1}\cos{\left(\frac{m\pi}{hn}\left(l+\frac{1}{2}\right)\right)}%&=\sum_{l=0}^{nh-1}\sin{\left(\frac{m\pi}{hn}\left(l+\frac{1}{2}\right)+\frac{\pi}{2}\right)}\\
&=\hspace*{-.1cm}\sum_{l=0}^{\lfloor\frac{nh-1}{2}\rfloor}\sin{\left(\pi\left(\frac{2l+1}{2}\frac{m}{hn}+\frac{1}{2}\right)\right)}\hspace*{-.05cm}+\hspace*{-.1cm}\sum_{l=0}^{\lfloor\frac{nh-1}{2}\rfloor}\sin{\left(\pi\left(\frac{1}{2}+m-\frac{2l+1}{2}\frac{m}{hn}\right)\right)}\\
&=\hspace*{-.1cm}\sum_{l=0}^{\lfloor\frac{nh-1}{2}\rfloor}\sin{\left(\pi\left(\frac{2l+1}{2}\frac{m}{hn}+\frac{1}{2}\right)\right)}\hspace*{-.05cm}-\hspace*{-.1cm}\sum_{l=0}^{\lfloor\frac{nh-1}{2}\rfloor}\sin{\left(\pi\left(\frac{1}{2}+\frac{2l+1}{2}\frac{m}{hn}\right)\right)}=0\,.\end{align*}
Analogously we deduce that
\begin{align*}
\langle \Phi_{j0},\Phi_{r0}\rangle_n&=\frac{1}{n}\sum_{l=1}^{nh}\frac{\sin{\left(j\pi h^{-1}n^{-1}l\right)}\sin{\left(r\pi h^{-1}n^{-1}l\right)}}{2hn^2\sin{\left(\frac{j\pi}{2nh}\right)}\sin{\left(\frac{r\pi}{2nh}\right)}}\\
&=\frac{1}{n}\sum_{l=1}^{nh}\frac{\cos{\left((j-r)\pi h^{-1}n^{-1}l\right)}-\cos{\left((j+r)\pi h^{-1}n^{-1}l\right)}}{4hn^2\sin{\left(\frac{j\pi}{2nh}\right)}\sin{\left(\frac{r\pi}{2nh}\right)}}\\
&=\delta_{jr}\left(4n^2\sin^2{(j\pi/(2nh))}\right)^{-1}=\delta_{jr}\|\Phi_{j0}\|_n^2\,.\end{align*}
We conclude that the families of functions $(\varphi_{jk}),(\Phi_{jk})$ are orthogonal systems with respect to $\left[\cdot,\cdot\right]_n$ and $\langle\cdot,\cdot\rangle_n$, respectively.\hfill $\Box$\\[.2cm]

\subsection*{Proof of Theorem \ref{theo3}}

Though we have exploited the well-known distribution characteristics for blockwise averages $(\tilde x_{jk},\tilde y_{jk})$ directly in our simple Gaussian model in order to motivate the spectral estimator, for a better transparency and clarity, we give a detailed analysis for the asymptotic expectation and variance here. By Proposition \ref{propE3} we can equivalently work with model $({\mathcal E}_3)$, where the observations are generated by a blockwise constant spot volatility $\Sigma_{\floor{t}}$.\\
Consider at first $\widehat{IC}_{oracle,n}^{(SPECV)}$ with known spot volatility matrix and correlation. We drop the superscript and subscripts in the following. The estimator is (asymptotically in \eqref{E0}) unbiased since
\begin{align*}\E\left[\widehat{IC}\right]&=\sum_{k=0}^{h^{-1}-1}h\sum_{j=1}^{nh} \|\Phi_{jk}\|_n^{-2}w_{jk}\,\E\left[\langle \Delta X,n\Phi_{jk}\rangle_n\langle \Delta Y,n\Phi_{jk}\rangle_n+\left[\eps^X,\varphi_{jk}\right]_n\left[\eps^Y,\varphi_{jk}\right]_n-\frac{\eta_{XY}}{n}\right]\\
&=\sum_{k=0}^{h^{-1}-1}h\sum_{j=1}^{nh}\|\Phi_{jk}\|_n^{-2}w_{jk}\left(\E\left[n\langle\Delta X,\Delta Y\rangle_{nh;k}\right]\|\Phi_{jk}\|^2_n+\frac{\eta_{XY}}{n}\left(\left[\varphi_{jk},\varphi_{jk}\right]_n-1\right)\right)\\
&=\sum_{k=0}^{h^{-1}-1}h\rho_{kh}\sigma_{kh}^X\sigma_{kh}^Y=\int_0^1\rho_t\sigma_t^X\sigma_t^Y\,dt+\KLEINO(1)\,,\end{align*}
in view of Parseval identity, %We have used
It\^{o} isometry, the orthogonality relations \eqref{o1} and \eqref{o2} and $\sum_{j=1}^{nh}w_{jk}=1$.\\
The variance calculation is simplified by the independent block structure:
\begin{align*}
\var\left(\widehat{IC}\right)=\sum_{k=0}^{h^{-1}-1}h^2\var\left(\sum_{j=1}^{nh}\|\Phi_{jk}\|_n^{-2}w_{jk}\tilde x_{jk}\tilde y_{jk}\right)\,.\end{align*}
Consider the variance on the $k$th block. By the orthogonality relations \eqref{o1} and \eqref{o2} of the $\varphi_{jk}$'s and $\Phi_{jk}$'s and application of \eqref{nv} to $\Sigma_{kh}$ and $\mathbf{H}$, the evaluation of the variance on the block yields
\allowdisplaybreaks[3]{
\begin{align*}&\var\left(\sum_{j=1}^{nh}\|\Phi_{jk}\|_n^{-2}w_{jk}\tilde x_{jk}\tilde y_{jk}\right)=\sum_{j=1}^{nh}\|\Phi_{jk}\|_n^{-4}w_{jk}^2\var\left(\left[\eps^X,\varphi_{jk}\right]_n\left[\eps^Y,\varphi_{jk}\right]_n\right)\\
&\hspace*{1.5cm}+\sum_{j=1}^{nh}\|\Phi_{jk}\|_n^{-4}w_{jk}^2\left(\E\left[\langle \Delta X,n\Phi_{jk}\rangle_n^2\langle \Delta Y,n\Phi_{jk}\rangle_n^2\right]-\left(\E\left[\langle \Delta X,n\Phi_{jk}\rangle_n\langle \Delta Y,n\Phi_{jk}\rangle_n\right]\right)^2\right)\\
&\hspace*{1.5cm}+\sum_{j=1}^{nh}\|\Phi_{jk}\|_n^{-4}w_{jk}^2\left(\E\left[\left[\eps^X,\varphi_{jk}\right]_n^2\langle \Delta Y,n\Phi_{jk}\rangle_n^2\right]+\E\left[\left[\eps^Y,\varphi_{jk}\right]_n^2\langle \Delta X,n\Phi_{jk}\rangle_n^2\right]\right)\\
&\hspace*{1.5cm}+2\sum_{j=1}^{nh}\|\Phi_{jk}\|_n^{-4}w_{jk}^2\E\left[\left(\left[\eps^X,\varphi_{jk}\right]_n\left[\eps^Y,\varphi_{jk}\right]_n\langle\Delta X,n\Phi_{jk}\rangle_n\langle\Delta Y,n\Phi_{jk}\rangle_n\right)\right]\\
&=\sum_{j=1}^{nh}\|\Phi_{jk}\|_n^{-4}w_{jk}^2\left(\frac{\eta_X^2\eta_Y^2+\eta_{XY}^2}{n^2}\left(\left[\varphi_{jk},\varphi_{jk}\right]_n\right)^2+\E\left[n^2(\langle\Delta X,\Delta Y\rangle_{nh;k})^2\right]\|\Phi_{jk}\|_n^4\right.\\
&\left.\hspace*{1cm}+\frac{\eta_X^2}{n}\left[\varphi_{jk},\varphi_{jk}\right]_n\E\left[\langle n\Delta Y,\Delta Y\rangle_{nh;k}\right]\|\Phi_{jk}\|_n^2+\frac{\eta_Y^2}{n}\left[\varphi_{jk},\varphi_{jk}\right]_n\E\left[\langle n\Delta X,\Delta X\rangle_{nh;k}\right]\|\Phi_{jk}\|_n^2\right.\\
&\left.\hspace*{1cm}+2\frac{\eta_{XY}}{n}\left[\varphi_{jk},\varphi_{jk}\right]_n\E\left[\langle n\Delta X,\Delta Y\rangle_{nh;k}\right]\|\Phi_{jk}\|_{n}^2\right)\\
&=\sum_{j=1}^{nh}w_{jk}^2\left(\|\Phi_{jk}\|_n^{-4}\frac{\eta_X^2\eta_Y^2+\eta_{XY}^2}{n^2}\left[\varphi_{jk},\varphi_{jk}\right]_n^2+(1+\rho_{kh}^2)(\sigma_{kh}^X\sigma_{kh}^Y)^2\right.\\
&\left.\hspace*{1cm}+n^{-1}\left(\eta_X^2(\sigma_{kh}^Y)^2+\eta_Y^2(\sigma_{kh}^X)^2+2\rho_{kh}\sigma_{kh}^Y\sigma_{kh}^X\eta_{XY}\right)\|\Phi_{jk}\|_n^{-2}\left[\varphi_{jk},\varphi_{jk}\right]_n\right)\\
&=\sum_{j=1}^{nh}w_{jk}^2\left(\|\Phi_{jk}\|_n^{-4}\,\frac{\eta_X^2\eta_Y^2+\eta_{XY}^2}{n^2}+(1+\rho_{kh}^2)(\sigma_{kh}^X\sigma_{kh}^Y)^2\right.\\
&\left.\hspace*{3cm}+\|\Phi_{jk}\|_n^{-2}\left(\frac{(\sigma_{kh}^Y\eta_{X})^2+(\sigma_{kh}^X\eta_{Y})^2+2\rho_{kh}\sigma_{kh}^X\sigma_{kh}^Y\eta_{XY}}{n}\right)\right).\end{align*}}
\noindent
We have used It\^{o} isometry, the features of model \eqref{E3} and Proposition \ref{propsbp}. To increase the readability we introduce the shortcut $I_{jk}$ and write the above term $\sum_j w_{jk}^2 I_{jk}^{-1}$, i.\,e.
\begin{align*}I_{jk}^{-1}&=\left(\|\Phi_{jk}\|_n^{-4}\,\frac{\eta_X^2\eta_Y^2+\eta_{XY}^2}{n^2}+(1+\rho_{kh}^2)(\sigma_{kh}^X\sigma_{kh}^Y)^2\right.\\
&\left.\hspace*{3cm}+\|\Phi_{jk}\|_n^{-2}\left(\frac{(\sigma_{kh}^Y\eta_{X})^2+(\sigma_{kh}^X\eta_{Y})^2+2\rho_{kh}\sigma_{kh}^X\sigma_{kh}^Y\eta_{XY}}{n}\right)\right)\,.\end{align*} 
Selecting appropriate weights with $\sum_{j=1}^{nh}w_{jk}=1$ on the blocks gives rise to an optimization problem with side condition. Minimizing the asymptotic variance %with a Lagrange multiplier $\lambda$
yields oracle weights
\begin{align}%w_{jk}=-\frac{\lambda}{2}\,I_{jk}~\mbox{and finally}~
w_{jk}=\frac{I_{jk}}{\sum_{l=1}^{nh}I_{lk}}\,.\end{align}
Plugging in these weights the asymptotic variance on the $k$th block becomes $\sum_{j}I_{jk}^{-1}(I_{jk}^{2}/(\sum_l I_{lk})^2)=(\sum_lI_{lk})^{-1}$. Next, consider
\begin{align*}\frac{1}{\sqrt{n}h}\sum_{j=1}^{nh}I_{jk}=\frac{1}{\sqrt{n}h}\sum_{j=1}^{nh}\left(a+bn^2\sin^4{\left(\frac{j\pi}{2nh}\right)}+cn\sin^2{\left(\frac{j\pi}{2nh}\right)}\right)^{-1}\,,\end{align*}
with the shortcuts $a=(1+\rho_{kh}^2)(\sigma_{kh}^X\sigma_{kh}^Y)^2$, $b=16(\eta_X^2\eta_Y^2+\eta_{XY}^2)$ and $c=4\big((\eta_X\sigma_{kh}^Y)^2+(\eta_Y\sigma_{kh}^X)^2$\\ $+2\eta_{XY}\rho_{kh}\sigma_{kh}^X\sigma_{kh}^Y\big)$.
For $0<\alpha<3/8$, we obtain the bound
\begin{align*}\frac{1}{\sqrt{n}h}\sum_{j=n^{\nicefrac{5}{8}+\alpha}h}^{nh}I_{jk}&\le \frac{1}{\sqrt{n}h}nh\left(a+\frac{b}{2}n^2\frac{\pi^4n^{\nicefrac{20}{8}+4\alpha}h^4}{16n^4h^4}+\frac{c}{2}n\frac{\pi^2n^{\nicefrac{10}{8}+2\alpha}h^2}{4n^2h^2}\right)^{-1}\\
&=\sqrt{n}\left(a+\frac{b}{32}n^{\nicefrac{1}{2}+4\alpha}+\frac{c}{8}n^{\nicefrac{1}{4}+2\alpha}\right)^{-1}=\KLEINO(1)\,,\end{align*}
where we use that $\sin{x}\ge x/2$ on $(0,1)$, and further that by Taylor
\begin{align*}\frac{1}{\sqrt{n}h}\hspace*{-.1cm}\sum_{j=1}^{n^{\nicefrac{5}{8}+\alpha}h}\hspace*{-.1cm}I_{jk}&= \frac{1}{\sqrt{n}h}\hspace*{-.15cm}\sum_{j=1}^{n^{\nicefrac{5}{8}+\alpha}h}\hspace*{-.15cm}\left(\hspace*{-.1cm}a\hspace*{-.05cm}+\hspace*{-.05cm}bn^2\hspace*{-.05cm}\left(\frac{\pi^4j^4}{16n^4h^4}+\mathcal{O}\left(j^6n^{-6}h^{-6}\right)\right)\hspace*{-.1cm}+\hspace*{-.05cm}cn\hspace*{-.1cm}\left(\frac{j^2\pi^2}{4n^2h^2}\hspace*{-.05cm}+\hspace*{-.05cm}\mathcal{O}\hspace*{-.05cm}\left(j^4n^{-4}h^{-4}\right)\hspace*{-.1cm}\right)\hspace*{-.1cm}\right)^{-1}\\
&=\frac{1}{\sqrt{n}h}\sum_{j=1}^{n^{\nicefrac{5}{8}+\alpha}h}\left(a+(b/16) \pi^4(j/\sqrt{n}h)^4+(c/4) \pi^2(j/\sqrt{n}h)^2\right)^{-1}+\KLEINO(1)\,.\end{align*}
This means that uniformly for all $k$, the high frequencies $j\gsim n^{\nicefrac{5}{8}}$ do not contribute to the variance due to their decreasing weights and thus the sine functions may be approximated by the first order Taylor expansion.
The overall variance is with $h_0:=h\sqrt{n}\left(\eta_X^2\eta_Y^2+\eta_{XY}^2\right)^{-\nicefrac{1}{4}}$
\begin{align*}\operatorname{VAR}_n=\sum_{k=0}^{h^{-1}-1}h \frac{h_0}{\sqrt{n}}\left(\eta_X^2\eta_Y^2+\eta_{XY}^2\right)^{\nicefrac{1}{4}}\left(\sum_l I_{lk}\right)^{-1}\end{align*}
and hence, $\sqrt{n}\left(\eta_X^2\eta_Y^2+\eta_{XY}^2\right)^{-\nicefrac{1}{4}}\operatorname{VAR}_n$ and $n^{-1/2}h^{-1}\sum_l I_{lk}$ have the structure of Riemann sums. Because of $h_0\to\infty$ we can replace $j/h_0$ by an integration variable $z$ and we expect
\begin{equation}\label{EqRiemann}
n^{-1/2}h^{-1}\sum_{j=1}^{nh} I_{jk}\approx \int_0^{nh/h_0} \frac{1}{f_1(z)}dz
\end{equation}
with
\begin{equation}\label{Eqf1}
f_1(z)=f_1(\Sigma,{\mathbf H};z)=\pi^4z^4+\pi^2z^2\frac{\left(\eta_Y^2(\sigma^X)^2
+\eta_X^2(\sigma^Y)^2+2\eta_{XY}\rho\sigma^X\sigma^Y\right)}{\sqrt{\eta_X^2\eta_Y^2+\eta_{XY}^2}}+(1+\rho^2)(\sigma^X\sigma^Y)^2.
\end{equation}

Now for high-frequency asymptotics, letting $h\rightarrow 0$ for the piecewise constant approximation on the time-axis and also $h_0\rightarrow\infty$  as $n\rightarrow\infty$ for the asymptotics in the spectral frequency domain, we write the sums as integrals over positive functions in $[0,1]\times \mathds{R}_+$, which are constant on asymptotically vanishing rectangles, to a function $J(z,t)\:=\left(f_1\left(\Sigma_t,\mathbf{H};z\right)\right)^{-1}$.
This yields 
\begin{align}\label{eqSumIntConv}
n^{\nicefrac{1}{2}}\var\left(\widehat{IC}\right)=\int_0^1\left(\int_0^{nh/h_0} J\left(\lfloor z\rfloor_{h_0},\lfloor t\rfloor_h\right)\,dz\right)^{-1}dt\rightarrow\int_0^1\left(\int_0^{\infty}J(z,t)\,dz\right)^{-1}\,dt
\end{align}
by dominated convergence, since $J$ is continuous and  $J(z,t)\lsim (1+z^2)^{-2}$ uniformly in $t$.\\
The limit stated in Theorem \ref{theo3} for the oracle estimator is obtained by explicitly solving the integral, see Proposition \ref{integral} below. In the literature on nonparametric estimation methods for related and more general models, much mathematical effort is put in the proof of (stable) central limit theorems. For our Gaussian models the conclusion of asymptotic normality is direct. We can apply a standard i.\,i.\,d.\,triangular central limit theorem like Corollary 3.\,1 from \cite{hall}, verifying a Lyapunov condition with fourth moments.

To prove the adaptive version we observe first that an oracle estimator with  $\Sigma_{\floor{kh}_r}$ instead of $\Sigma_{kh}$ inserted into the optimal weights  attains still the optimal asymptotic variance whenever $r\to 0$. This follows from the same dominated convergence argument to establish \eqref{eqSumIntConv}.

Using the pilot estimator $\hat\Sigma_t$ with $\|\hat\Sigma-\Sigma\|_\infty=o_P(\delta_n)$, we shall show for the difference of estimators
\[ \sum_{k=0}^{h^{-1}-1}h\sum_{j=1}^{nh}(w_j(\hat\Sigma_{\floor{kh}_r})-w_j(\Sigma_{\floor{kh}_r}))
\|\Phi_{jk}\|_n^{-2}(\tilde x_{jk}\tilde y_{jk}-\eta_{XY}/n)=o_P(n^{-1/4})
\]
such that the adaptive result then follows from Slutsky's Lemma. We can work on the event $G_n=\{\|\hat\Sigma-\Sigma\|_\infty\le \delta_n\}$, since its complement has vanishing probability by the convergence rate for $\hat\Sigma$.\\
We can handle the arbitrary probabilistic dependence of $\hat\Sigma$ on $G_n$ by considering the maximal deterministic error and proving
\[ \sup_{\tilde\Sigma\ge 0:\|\tilde\Sigma-\Sigma\|_\infty\le\delta_n}
\sum_{k=0}^{h^{-1}-1}h\sum_{j=1}^{nh}(w_j(\tilde\Sigma_{\floor{kh}_r})-w_j(\Sigma_{\floor{kh}_r}))
\|\Phi_{jk}\|_n^{-2}(\tilde x_{jk}\tilde y_{jk}-\eta_{XY}/n)=o_P(n^{-1/4}).
\]
Splitting the terms along the coarse grid in $k$ and writing $Sym(1)=\{M\in\R^{2\times 2}\,|\, M \text{ symmetric},\|M\|\le 1\}$, it thus suffices to prove uniformly in $m=0,\ldots,r^{-1}-1$
\[ \sup_{M\in Sym(1)}\hspace*{-.1cm}\Big(r^{-1}hn^{1/4}\hspace*{-.1cm}
\sum_{k=mr/h}^{(m+1)r/h-1}\sum_{j=1}^{nh}(w_j(\Sigma_{mr}+\delta_nM)-w_j(\Sigma_{mr}))
\|\Phi_{jk}\|_n^{-2}(\tilde x_{jk}\tilde y_{jk}-\eta_{XY}/n)\Big)\hspace*{-.05cm}=\hspace*{-.05cm}o_P(1),
\]
which in turn follows by using $\sqrt{r}/\delta_n\to \infty$ and proving tightness in $C(Sym(1)\to \R,\|\,\cdot\,\|_{\infty})$ of the process
\[ \Big(r^{-1}hn^{1/4}
\sum_{j=1}^{nh}(w_j(\Sigma_{mr}+\sqrt{r}M)-w_j(\Sigma_{mr}))
\sum_{k=mr/h}^{(m+1)r/h-1}\|\Phi_{jk}\|_n^{-2}(\tilde x_{jk}\tilde y_{jk}-\eta_{XY}/n)\Big)_{M\in Sym(1)}.
\]
The standard argument is that uniform tightness in $M$ of processes $(X_n(M))$  implies that for matrices $M_n=r^{-1/2}(\hat\Sigma_{mr}-\Sigma_{mr})$ with $\|M_n\|\le r^{-1/2}\delta_n\to 0$, we obtain convergence
$X_n(M_n)-X_n(0)\stackrel{p}{\to} 0$.

By Kolmogorov's criterion (cf.\,Corollary 16.\,9 in \cite{kallenberg}), embedding $Sym(1)$ with its three degrees of freedom into $[-1,1]^3$, it suffices to find constants $\beta,C>0$ (uniformly in $m$) such that for all $M,M'\in Sym(1)$:
\begin{align}\notag &\E\Big[\Big(r^{-1}hn^{1/4}
\sum_{k=mr/h}^{(m+1)r/h-1}\sum_{j=1}^{nh}(w_j(\Sigma_{mr}+\sqrt{r}M)-w_j(\Sigma_{mr}+\sqrt{r}M'))
\|\Phi_{jk}\|_n^{-2}(\tilde x_{jk}\tilde y_{jk}-\eta_{XY}/n)\Big)^4\Big]\\
&\label{ada}\le C\|M-M'\|^{3+\beta}.
\end{align}
For this we bound the gradient $\nabla w_j(\Sigma)$ uniformly in $j,n,h$ and $\Sigma$. Writing $w_j(\Sigma)=C(\Sigma)W_j(\Sigma)$ from formula \eqref{woracle} with $W_j(\Sigma)=I_{jk}=(\var(\tilde x_{jk}\tilde y_{jk}))^{-1}$, emphasizing independence of the block $k$, and the normalization factor $C(\Sigma)$, we derive
\begin{align*}
|\nabla W_j(\Sigma)|&\lesssim W_j(\Sigma)^2(1+j^2/h_0^2)\lesssim W_j(\Sigma),\\
|\nabla C(\Sigma)|&= \left|\nabla \left(\sum_jW_j(\Sigma)\right)^{-1}\right|\lesssim C(\Sigma)^2\sum_j|\nabla W_j(\Sigma)|\lesssim C(\Sigma).
\end{align*}
This yields the uniform estimate $|\nabla w_j(\Sigma)|\lesssim h_0^{-1}(1+j^4/h_0^4)^{-1}$.

By independence and Gaussianity of the factors in $(\tilde x_{jk}\tilde y_{jk}-\eta_{XY}/n)$, the left-hand side in \eqref{ada} above is of order
\begin{align*}
&\left(\hspace*{-.15cm}\var\Big(r^{-1}hn^{1/4}
\hspace*{-.1cm}\sum_{k=mr/h}^{(m+1)r/h-1}\hspace*{-.1cm}\sum_{j=1}^{nh}(w_j(\Sigma_{mr}+\sqrt{r}M)\hspace*{-.05cm}-\hspace*{-.05cm}w_j(\Sigma_{mr}+\sqrt{r}M'))
\|\Phi_{jk}\|_n^{-2}(\tilde x_{jk}\tilde y_{jk}\hspace*{-.05cm}-\hspace*{-.05cm}\eta_{XY}/n)\Big)\hspace*{-.15cm}\right)^2\\
& \lesssim \Big(r^{-2} h^2 n^{1/2}\sum_{j=1}^{nh}\sum_{k=mr/h}^{(m+1)r/h-1}
\|\nabla w_j\|_\infty^2\|\sqrt{r}(M-M')\|^2
(j^4/h^4)\var(\tilde x_{jk}\tilde y_{jk})\Big)^2\\
&\lesssim
\Big(r^{-2} h^2 n^{1/2}\sum_{j=1}^{nh}(r/h)
(h_0(1+j^4/h_0^4))^{-2}r\|M-M'\|^2
(1+j^2/h_0^2)^2\Big)^2\\
&\lesssim
\|(M-M')\|^4\,.
\end{align*}

%Concerning the adaptive weights obtained from a preestimation step, we can work conditionally on them and by independence just assume to dispose of a deterministic sequence $\Sigma_{t,n}$ converging uniformly to $\Sigma_t$. By definition of the weights summing up to one, the bias is (asymptotically) zero as in the oracle case. The  variance on a block is correspondingly given by
%\[ V_k^{(n)}:=\frac{\sum_{j=1}^{nh}(I_{jk}^{(n)})^2I_{jk}^{-1}}{\big(\sum_{r=1}^{nh}I_{rk}^{(n)}\big)^2}
%\]
%with $I_{jk}^{(n)}$ defined like $I_{jk}$, but in terms of the approximate values $\Sigma_{t,n}$ instead of $\Sigma_t$. In view of the order $n^{-1}\norm{\Phi_{jk}}_n^{-2}\thicksim j/h$, compare \eqref{empnorm}, the asymptotically dominating term in $I_{jk}^{-1}$ is $n^{-2}\norm{\Phi_{jk}}_n^{-4}(\eta_X^2\eta_Y^2+\eta_{XY}^2)$, which is independent of $\Sigma_t$. Together with the uniform convergence of $\Sigma_{t,n}$ this observation shows $\abs{(I_{jk}^{(n)})^{-1}-I_{jk}^{-1}}=\KLEINO(I_{jk}^{-1})$ uniformly over all $j$ and $k$. Consequently, we also have $\abs{I_{jk}^{(n)}-I_{jk}}=\KLEINO(I_{jk})$ uniformly and we conclude uniformly in $k$
%\[ V_k^{(n)}=V_k(1+\KLEINO(1))\text{ with }  V_k:=\Big(\sum_{r=1}^{nh}I_{rk}\Big)^{-1},
%\]
%the oracle variance over a block. Pursuing the same calculus as in the oracle case, the rescaled total variance converges to the same integral.

The following Proposition completes the proof of Theorem \ref{theo3}. For the computation of the Riemann integrals we use some concepts from complex analysis.
\begin{prop}\label{integral}
Consider the functions $f_1:\C\rightarrow\C$ generalizing the function \eqref{Eqf1} and $f_2:\C\rightarrow\C$ with
\[
f_2(z):=f_2(\rho,\sigma;z)=\pi^4z^4+2\sigma^2\pi^2z^2+(1+\rho^2)\sigma^4=f_1\left(\begin{pmatrix}\sigma&\rho\\\rho &\sigma\end{pmatrix},
\begin{pmatrix} \eta & 0\\ 0 &\eta\end{pmatrix};z\right)\,,
\]
which depend on parameters $\rho$ and positive $\sigma^{\,\cdot\,},\eta_{\,\cdot\,}$. For the improper integrals along the positive real line in the case $\sigma^{\,\cdot\,}>0,\rho\ne 0,$ the following identities hold true:
\begin{subequations}
\begin{align}\label{integral1}\int_0^{\infty}\frac{1}{f_2(x)}\,dx=\frac{1}{2\sigma^3\rho(1+\rho^2)^{\nicefrac{1}{4}}}\,\sin\left(\frac{1}{2}\left(\operatorname{Arg}\left(\ii -\rho\right)-\frac{\pi}{2}\right)\right)\,,\end{align}
\begin{align}\label{integral2}\int_0^{\infty}\frac{1}{f_1(x)}\,dx=\frac{1}{\sqrt{2}\sqrt{A^2-B}\sqrt{B}}\left(\sqrt{A+\sqrt{A^2-B}}-\operatorname{sgn}(A^2-B)\sqrt{A-\sqrt{A^2-B}}\right)\,,\end{align}
\end{subequations}
where $A$ and $B$ are short expressions for the terms
\begin{align*}
A&=\left(\frac{\eta_Y^2(\sigma^X)^2+\eta_X^2(\sigma^Y)^2+2\eta_{XY}\rho\sigma^X\sigma^Y}{\sqrt{\eta_X^2\eta_Y^2+\eta_{XY}^2}}\right),
\quad B=4(\sigma^X\sigma^Y)^2(1+\rho^2)\,,
\end{align*}
$\operatorname{Arg}(z)$, with $\operatorname{Arg}(z)=\arctan(\operatorname{Im}(z)/\operatorname{Re}(z))$ for $\operatorname{Re}(z)>0$\,, denotes the argument of a complex number and we determine to take the root located in the upper half plane $\H=\{z\in\C|\operatorname{Im}(z)> 0\}$ in \eqref{integral2} in the case that $A^2-B< 0$.\\
For $\rho=0$ and strictly positive $\sigma^{\,\cdot\,}$ (and $\eta_{\,\cdot\,}$), the integrals yield
\begin{subequations}
\begin{align}\label{integral3}\int_0^{\infty}\frac{1}{f_2(x)}\,dx=\frac{1}{4\sigma^3}\,,\end{align}
\begin{align}\label{integral4}\int_0^{\infty}\frac{1}{f_1(x)}\,dx=\frac{1}{2\eta_Y(\eta_X^2\eta_Y^2+\eta_{XY}^2)^{-1/4}(\sigma^X)^2\sigma^Y+2\eta_X(\eta_X^2\eta_Y^2+\eta_{XY}^2)^{-1/4}(\sigma^Y)^2\sigma^X}\,.\end{align}
\end{subequations}
\end{prop}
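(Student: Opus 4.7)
The plan is to evaluate both integrals by reducing the biquadratic denominator $f_i(x)$ to a quadratic in $y = \pi^2 x^2$, factoring over $\C$, and applying partial fractions together with the elementary identity $\int_0^\infty(x^2+c^2)^{-1}\,dx = \pi/(2c)$. Equivalently, one may compute residues at the two poles of $1/f_i$ lying in the upper half-plane $\H$, which serves as a cross-check for the branch conventions. I would treat \eqref{integral2} first and then obtain \eqref{integral1} as a specialization; the degenerate cases \eqref{integral3} and \eqref{integral4} follow by inspection.

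For \eqref{integral2}, I rewrite $f_1(z) = \pi^4 z^4 + \pi^2 A z^2 + B/4$, observing that $(1+\rho^2)(\sigma^X\sigma^Y)^2 = B/4$ and that the middle coefficient is exactly $\pi^2 A$. The substitution $y = \pi^2 z^2$ turns the quartic into $y^2 + Ay + B/4$ with roots $y_\pm = (-A \pm \sqrt{A^2 - B})/2$, so that $f_1(x) = (\pi^2 x^2 - y_+)(\pi^2 x^2 - y_-)$. Partial fractions combined with $\int_0^\infty(\pi^2 x^2 - y)^{-1}\,dx = 1/(2\sqrt{-y})$ give
\begin{align*}
\int_0^\infty\frac{dx}{f_1(x)} = \frac{1}{2(y_+ - y_-)}\left(\frac{1}{\sqrt{-y_+}} - \frac{1}{\sqrt{-y_-}}\right).
\end{align*}
Using $y_+ y_- = B/4$, $y_+ - y_- = \sqrt{A^2 - B}$ and $-y_\pm = (A \mp \sqrt{A^2-B})/2$, rationalizing yields exactly \eqref{integral2} for $A^2 > B$. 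When $A^2 < B$ the roots become a complex-conjugate pair on the circle $|y| = \sqrt{B}/2$; the prescription ``take the root in $\H$'' flips the sign of one of the two square roots relative to the principal branch, which is precisely recorded by the factor $\operatorname{sgn}(A^2-B)$ in the statement. The residue computation $\tfrac12\int_{-\infty}^\infty f_1(x)^{-1}\,dx = \pi i\sum_{z_0\in\H}1/f_1'(z_0)$, with $f_1'(z_0) = 2z_0\pi^2(\pm\sqrt{A^2-B})$ at the two poles in $\H$, yields the same answer and fixes the branches automatically.

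Formula \eqref{integral1} is the special case $A = 2\sigma^2$, $B = 4(1+\rho^2)\sigma^4$, for which $A^2 - B = -4\rho^2\sigma^4 < 0$ and $y_\pm = \sigma^2(-1 \pm i\rho)$. Writing $\sqrt{2\sigma^2(1\pm i\rho)} = \sigma\sqrt{2}(1+\rho^2)^{1/4}e^{\pm i\vartheta}$ with $\vartheta = \tfrac12\operatorname{Arg}(1+i\rho)$, the $\H$-branch convention makes $\sqrt{A+\sqrt{A^2-B}} + \sqrt{A-\sqrt{A^2-B}}$ collapse to $\sqrt{2}\sigma(1+\rho^2)^{1/4}\cdot 2i\sin\vartheta$; combining with the prefactor from \eqref{integral2} and the identity $\operatorname{Arg}(i-\rho) = \pi/2 + \operatorname{Arg}(1+i\rho)$ (a consequence of $i-\rho = i(1+i\rho)$), so that $\vartheta = \tfrac12(\operatorname{Arg}(i-\rho)-\pi/2)$, produces \eqref{integral1}. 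The $\rho = 0$ cases are elementary: for $f_2$ the double root $y_+ = y_- = -\sigma^2$ makes $f_2(x) = (\pi^2 x^2 + \sigma^2)^2$, and a direct substitution gives \eqref{integral3}; \eqref{integral4} follows by substituting $\rho = 0$ into \eqref{integral2} and simplifying algebraically. The main technical obstacle throughout is the careful tracking of complex square-root branches across the critical locus $A^2 = B$; the $\operatorname{sgn}(A^2-B)$ factor in \eqref{integral2} encodes exactly this branch switch, after which the remaining manipulations are routine.
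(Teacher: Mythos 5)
Your proposal is correct, and it reaches the result by a route that is genuinely more elementary than the paper's. The paper treats $f_2^{-1}$ and $f_1^{-1}$ as meromorphic functions and invokes the residue theorem over a semicircular contour in $\H$, summing the residues at the two poles in the upper half-plane; you instead substitute $y=\pi^2x^2$, factor the resulting quadratic, and reduce everything to the real one-dimensional identity $\int_0^\infty(\pi^2x^2+c^2)^{-1}dx=1/(2c)$ (valid for $\operatorname{Re}(c)>0$ by analytic continuation), keeping the residue computation only as a cross-check. Both arguments end up confronting exactly the same delicate point, namely tracking which square root lies in $\H$ when $A^2-B<0$ and recording the branch switch via the $\operatorname{sgn}(A^2-B)$ factor; your identification of $\operatorname{Arg}(\ii-\rho)=\pi/2+\operatorname{Arg}(1+\ii\rho)$ and the collapse of $e^{\ii\vartheta}-e^{-\ii\vartheta}$ to $2\ii\sin\vartheta$ reproduces \eqref{integral1} correctly. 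What your ordering buys is transparency: you prove \eqref{integral2} once in full generality and obtain \eqref{integral1} as the specialization $A=2\sigma^2$, $B=4(1+\rho^2)\sigma^4$, whereas the paper computes the two integrals separately (doing the simpler $f_2$ first as a warm-up) and explicitly declines to prove the $\rho=0$ cases. One small caveat on those degenerate cases, which concerns the statement rather than your method: substituting $\rho=0$ into \eqref{integral2} gives $\big(\sqrt{B}\sqrt{A+\sqrt{B}}\big)^{-1}$, and matching this to the displayed form of \eqref{integral4} additionally requires $\eta_{XY}=0$ (so that $\sqrt{\eta_X^2\eta_Y^2+\eta_{XY}^2}=\eta_X\eta_Y$ and $A+\sqrt B$ becomes a perfect square); your ``simplifying algebraically'' should flag this restriction rather than pass over it.
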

\begin{proof}
The meromorphic functions $f_1^{-1}:\C\rightarrow\C,\,f_2^{-1}:\C\rightarrow\C$ have four simple poles in the complex plane, since $f_1,\,f_2$ each has four simple non-real zeros. We can apply a specific version of the residue theorem (cf.\,Theorem 7.10\,in Chapter III of \cite{busam}) to evaluate the above improper real integrals. We restrict ourselves to the case $\rho\ne 0$ for which the solutions of \eqref{integral1} and \eqref{integral2} are not feasible using algebra programs or standard integral tables.\\
We first give the proof of \eqref{integral1} for the simplified function $f_2^{-1}$. The zeros of $f_2$ are
\begin{align*}z_{2;1,2,3,4}=\pm \frac{\exp{\left(\ii \,\pi/4\right)}}{\pi}\sqrt{\ii \pm\rho}\,\sigma\end{align*}
and are located symmetrically on a disk around the null in the complex plane. The residue theorem allows to calculate the integral $\int f_2^{-1}$ along the real line by the limit of a  curve integral over a half-disk in the upper half plane. Since $f_2$ is even on the real line and $z_{2;1}$ and $z_{2;4}$ are the poles in the upper half plane, we obtain:
\begin{align*} \int_0^{\infty}\frac{1}{f_2(x)}\,dx&=\pi\ii\left(\operatorname{Res}\left(f_2^{-1};z_{2;1}\right)+\operatorname{Res}\left(f_2^{-1};z_{2;4}\right)\right)\\
&=\pi\ii\left(\left(4\exp{(\ii \pi/4)}\pi\sqrt{\rho+\ii}\,\sigma^3+4\exp{(\ii \,3\pi/4)}\pi(\rho+\ii)^{\nicefrac{3}{2}}\sigma^3\right)^{-1}\right. \\
&~~~~~~\left. -\left(4\exp{(\ii \pi/4)}\pi\sqrt{\ii-\rho}\,\sigma^3+4\exp{(\ii \,3\pi/4)}\pi(\ii -\rho)^{\nicefrac{3}{2}}\sigma^3\right)^{-1}\right)\\
&=\frac{1}{4\sigma^3\rho}(-1)^{\nicefrac{3}{4}}\left((\ii-\rho)^{-\nicefrac{1}{2}}-(\ii+\rho)^{-\nicefrac{1}{2}}\right)\\
&=\left(2\sigma^3\rho(\rho^2+1)^{\nicefrac{1}{4}}\right)^{-1}\,\sin{\left(\frac{1}{2}\left(\operatorname{Arg}(\ii-\rho)-\frac{\pi}{2}\right)\right)}\,.\end{align*}
In this proof we always use the unique square root in the upper half plane of complex numbers (and the usual definition for real numbers).\\
The analysis for the general case $f_1$ is a bit more involved, since depending on the parameters $\rho,\sigma^{\,\cdot\,}$ and the ratios $\eta_X^2/\sqrt{\eta_X^2\eta_Y^2+\eta_{XY}^2}$, $\eta_Y^2/\sqrt{\eta_X^2\eta_Y^2+\eta_{XY}^2}$
for the zeros of $f_1$:
\begin{align*}z_{1;1,2,3,4}=\pm\frac{1}{\sqrt{2}\,\pi}\sqrt{-A\pm\sqrt{A^2-B}}\,,\end{align*}
\begin{figure}[t]
\fbox{\includegraphics[width=6.7cm]{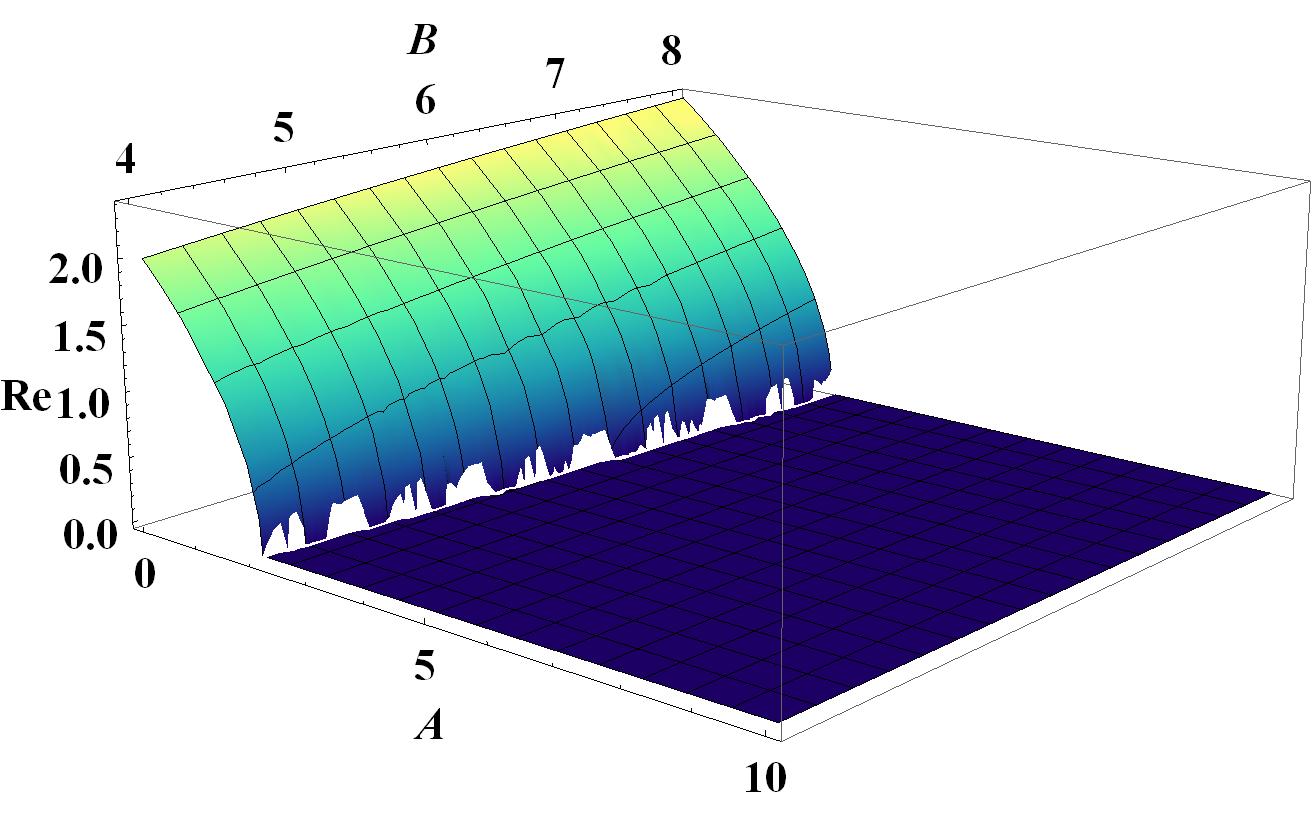}}~~~~\fbox{\includegraphics[width=7.3cm]{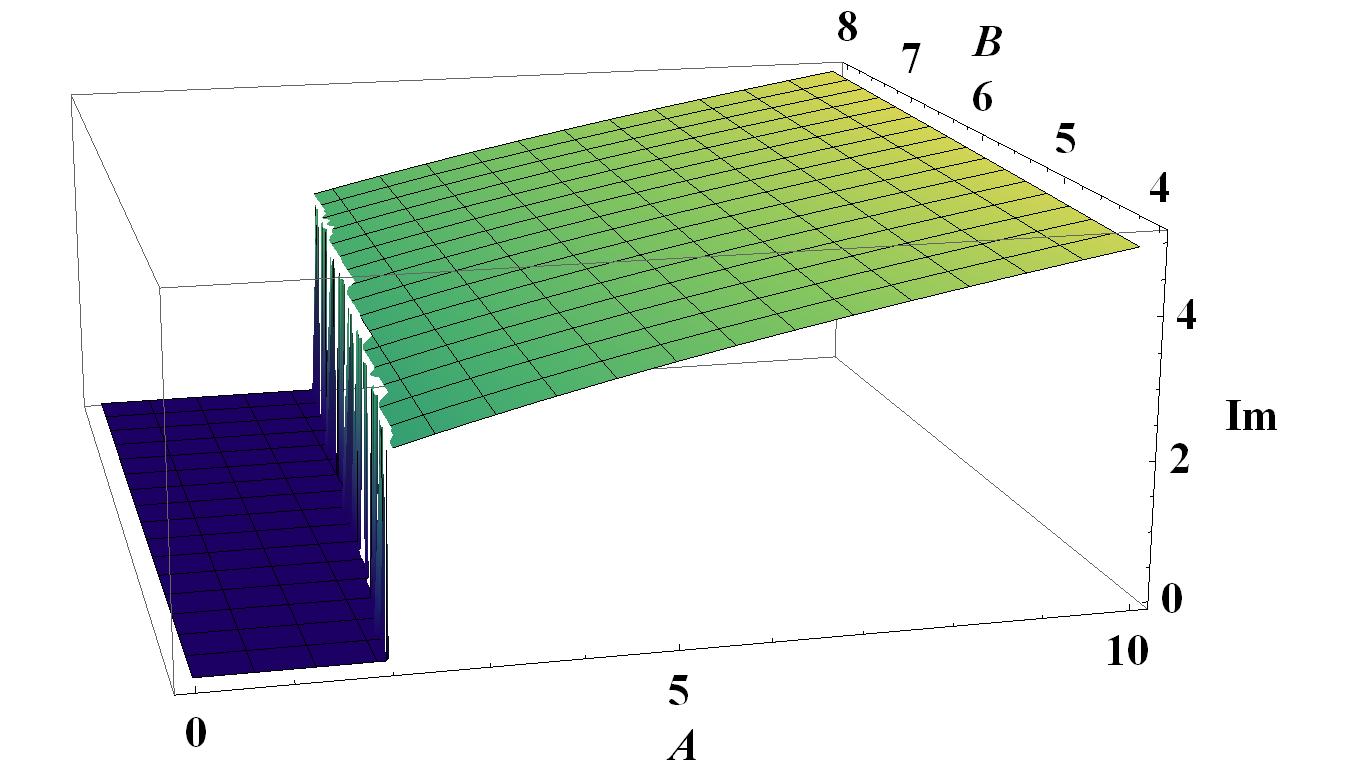}}\noindent
\caption{\label{fig:complex3d}Real and imaginary part of $\sqrt{-A-\ii\sqrt{B-A^2}}+\sqrt{-A+\ii\sqrt{B-A^2}}$.}\end{figure}
\hspace*{-.1cm}it holds true that either $z_{1;1}(\text{``++''})$ and $z_{1;4}(\text{``- -''})$ or $z_{1;1}(\text{``++''})$ and $z_{1;2}(\text{``+ -''})$ are located in the upper half plane. This role change dependent on whether $A^2-B$ is positive or negative is illustrated in Figure \ref{fig:complex3d}, in which the interesting factor appearing in the solution of the integral is depicted for a possible range of values for $A$ and $B$ in a certain codomain of $\rho,\sigma^X,\sigma^Y,\eta_X/\eta_Y$ for $\eta_{XY}=0$. Using the above convention for square roots, the left-hand side of \eqref{integral2} yields
\begin{align*}\int_0^{\infty}f_1^{-1}(x)\,dx&=\frac{\ii}{\sqrt{2}}\frac{1}{\sqrt{A^2-B}}\left((-A+\sqrt{A^2-B})^{-\nicefrac{1}{2}}\pm(-A-\sqrt{A^2-B})^{-\nicefrac{1}{2}}\right)\\
&=\frac{1}{\sqrt{2}}\frac{1}{\sqrt{B-A^2}}\frac{1}{\sqrt{B}}\left(\sqrt{-A-\ii\sqrt{B-A^2}}+\operatorname{sgn}(B-A^2)\sqrt{-A+\ii\sqrt{B-A^2}}\right)\\
&=\frac{1}{\sqrt{2}} \frac{1}{\sqrt{A^2-B}}\frac{1}{\sqrt{B}}\left(\sqrt{A+\sqrt{A^2-B}}-\operatorname{sgn}(A^2-B)\sqrt{A-\sqrt{A^2-B}}\right)\,.\end{align*}
In the first line ``$\pm$'' indicates that depending on the parameters there are two different solutions. As visualized in Figure \ref{fig:complex3d}, the right factor in the second line is purely real if $B-A^2>0$ and purely imaginary if $B-A^2<0$. The expressions in the second and third line hence give the positive real solution in each case. In the case $B-A^2>0$, we can write the solution $\sqrt{2}(B-A^2)^{-\nicefrac{1}{2}}B^{-\nicefrac{1}{4}}\cos{\left(\frac{1}{2}\operatorname{Arg}(A+\ii\sqrt{B-A^2})\right)}$ similiarly to \eqref{integral1} above.
\end{proof}
\bibliographystyle{mychicago}
\bibliography{literaturabr}\vspace*{1cm}
\noindent
Markus Bibinger, Institute of Mathematics, Humboldt-Universität zu Berlin, Unter den Linden 6, 10099 Berlin, Germany, E-mail: bibinger@math.hu-berlin.de\\
Markus Rei\ss, Institute of Mathematics, Humboldt-Universität zu Berlin, Unter den Linden 6, 10099 Berlin, Germany, E-mail: mreiss@math.hu-berlin.de

\end{document}